\documentclass[12pt]{amsart}
\usepackage[applemac]{inputenc}

\usepackage[colorlinks=true,linkcolor=magenta,citecolor=blue,urlcolor=blue,hypertexnames=false,linktocpage]{hyperref}
\usepackage{bookmark}
\usepackage{amsthm,thmtools,amssymb,amsmath,amscd,amsfonts}
\usepackage{cleveref}
\usepackage{fancyhdr}
\usepackage{esint}
\usepackage{enumerate}

\usepackage{pictexwd,dcpic}

\newtheorem{theorem}{Theorem}[section]

\newtheorem{Assum}{Assumption}[section]
\newtheorem{lemma}[theorem]{Lemma}
\newtheorem{proposition}[theorem]{Proposition}
\newtheorem{corollary}[theorem]{Corollary}
\theoremstyle{definition}
\newtheorem{definition}[theorem]{Definition}

\theoremstyle{remark}
\newtheorem{remark}[theorem]{Remark}

\numberwithin{equation}{section}

\newcommand{\n}{\bar{\nabla}}
\newcommand{\ep}{\varepsilon}

\newcommand{\Sp}{\mathbb{S}^n}
\newcommand{\Euc}{\mathbb{R}^{n+1}}



\begin{document}

\title[]
 {Orlicz--Minkowski flows}
\author[P. Bryan, M. N. Ivaki, J. Scheuer]{Paul Bryan, Mohammad N. Ivaki, Julian Scheuer}
\dedicatory{}
\date{\today}
\subjclass[2010]{}
\keywords{Non-homogenous curvature flows; Orlicz--Minkowski problems, Christoffel--Minkowski type problems}
\begin{abstract}
We study the long-time existence and behavior for a class of anisotropic non-homogeneous Gauss curvature flows whose stationary solutions, if exist, solve the regular Orlicz--Minkowski problems. As an application, we obtain old and new results for the regular even Orlicz--Minkowski problems; the corresponding $L_p$ version is the even $L_p$-Minkowski problem for $p>-n-1$. Moreover, employing a parabolic approximation method, we give new proofs of some of the existence results for the \textit{general} Orlicz--Minkowski problems; the $L_p$ versions are the even $L_p$-Minkowski problem for $p>0$ and the $L_p$-Minkowski problem for $p>1$. In the final section, we use a curvature flow with no global term to solve a class of $L_p$-Christoffel--Minkowski type problems.
\end{abstract}

\maketitle
\tableofcontents
\section{Introduction}
A convex body in Euclidean space is a compact convex set with non-empty interior. Write $\mathrm{K}$ for set of convex bodies, and $\mathrm{K}_o$ for the set of convex bodies containing the origin $o$. The support function of a convex body $K$ is defined by
\[h_K(u)=\sup_{v\in K}\langle u,v \rangle\quad\hbox{for}~u\in\Euc.\]
For any $u$ on the boundary of $K,$ let $\nu_K(u)$ be the set of all unit exterior normal vectors at $u.$
The surface area measure of $K$, $S_K$, is a Borel measure on the unit sphere defined by
\[S_K(\omega)=\mathcal{H}^n(\nu_K^{-1}(\omega))~\hbox{for all Borel sets }~\omega\subset\Sp.\]
Here $\mathcal{H}^n$ denotes the $n$-dimensional Hausdorff measure. When the boundary of $K$, $\partial K$, is a $C^2$ smooth, strictly convex hypersurface, then $dS_K=\sigma_nd\theta,$ where $\sigma_n$ is the product of the principal radii of curvature and $d\theta$ is the standard spherical Lebesgue measure. In this case, $h_K$, as a function on the unit sphere, is given by \[h_K(u)=\langle u,\nu_K^{-1}(u)\rangle.\]

The Minkowski problem is one of the cornerstones of the classical Brunn--Minkowski theory. It asks what are the necessary and sufficient conditions on a Borel measure $\mu$ on $\Sp$ in order for it to be the surface area measure of a convex body. The complete solution to this problem was found by Minkowski, Aleksandrov and Fenchel--Jessen (see \cite{Schneider2013a}): A Borel measure $\mu$ whose support is not contained in a closed hemisphere is the surface area measure of a convex body if and only if
\[\int_{\Sp} ud\mu(u)=o.\]
Moreover, the solution is unique up to translations.

Let $\varphi:(0,\infty)\to (0,\infty)$ be a continuous function. The \emph{general} Orlicz--Minkowski problem asks what are the necessary and sufficient conditions on a Borel measure $\mu$ on $\Sp$, such that there exists a convex body $K\in \mathrm{K}_o$ so that
\begin{align} \label{measure orlicz prob}
  \varphi(h_K)dS_K=\gamma d\mu \quad\hbox{for some constant}~\gamma>0.
\end{align}
This problem is a generalization of the $L_p$-Minkowski problem (i.e., $\varphi(s)=s^{1-p}$) which itself was put forward by Lutwak \cite{Lutwak1993a} almost a century after Minkowski's original work and stems from the $L_p$ linear combination of convex bodies. We refer the reader to \cite{Bianchi2020,Bianchi2019a,Boroczky2016b,Boroczky2012a,Boroczky2017,Chen2017a,Chou2006,Ivaki2019a,Jian2015,Lutwak1995a,Stancu2002} regarding the $L_p$-Minkowski problem and to \cite{Cianchi2009,Lutwak2002a} for applications.

A natural generalization of $L_p$ spaces are Orlicz spaces, motivating the Orlicz linear combination of convex bodies and leading to the Orlicz--Minkowski problem. We keep the discussion brief here and the reader may consult \cite{Bianchi2019b,Haberl2010,Gardner2014a,Gardner2019,Gardner2020} as well as \cite{Xi2014,Lutwak2010,Schneider2013a,Zou2014} for the origin of the Orlicz--Brunn--Minkowski theory and related concepts.

The \emph{regular} Orlicz--Minkowski problem asks what are the necessary and sufficient conditions on the smooth function $f:\Sp\to (0,\infty)$ such that there exists a convex hypersurface with support function $h$ (as a function on the unit sphere) satisfying
\begin{align} \label{orlicz prob}
 f \varphi(h)\sigma_n(h_{ij}+\delta_{ij}h)=\gamma\quad\hbox{for some constant}~\gamma>0.
\end{align}
The subscripts of $h$ denote covariant derivatives with respect to a local orthonormal frame field on $\Sp$. Our approach to solving this problem is via the flow method in the regular case \eqref{orlicz prob} and by parabolic approximation in the general case \eqref{measure orlicz prob}.
\section{Results}
\subsection{Curvature flows}
Let \[x_0:M^n\to \Euc\] be a smooth parametrization of a closed, strictly convex hypersurface $M_0$ with the origin of $\Euc$ in its interior. One of the flows that we are interested in is the family of hypersurfaces $\{M_t\}$ given by the smooth map
\[x:M^n\times [0,T)\to \Euc\]
satisfying the initial value problem
\begin{align}\label{F-param}
\left\{
  \begin{array}{ll}
  \partial_tx= f (\nu)\langle x,\nu\rangle\frac{\varphi(\langle x,\nu\rangle)}{\mathcal{K}}\nu-\zeta(t) x; \\
   x(\cdot,0)=x_0(\cdot),
  \end{array}
\right.
\end{align}
where $\mathcal{K}(\cdot,t)$ and $\nu(\cdot,t)$ are respectively the Gauss curvature and the outer unit normal vector of $M_t:=x(M^n,t)$, $ f:\Sp\to (0,\infty) $ and $\varphi:(0,\infty)\to (0,\infty)$ are smooth functions and
\[\zeta(t):=\frac{\int\langle x,\nu\rangle d\mu_{M_{t}}}{\int \frac{\langle x,\nu\rangle \mathcal{K}}{ f (\nu)\varphi(\langle x,\nu\rangle)} d\mu_{M_t}}.\]
To explain our interest in studying this flow, we need some definitions and notation.

By a direct calculation, along the flow \eqref{F-param} the support functions \[h:\Sp\times [0,T)\to \mathbb{R}\] of the $M_t$ (as long as they are strictly convex) satisfy
\begin{align}\label{nf}
\left\{
  \begin{array}{ll}
\partial_th= f  h\varphi(h)\sigma_n-\zeta h;\\
h(u,0)=h_{M_0}.
  \end{array}
\right.
\end{align}
Stationary solutions of (\ref{nf}), whenever they exist, are exactly the solutions of the regular Orlicz--Minkowski problem and thus the limits of the flow hypersurfaces are solutions of (\ref{orlicz prob}).

For some choices of $f$ and $\varphi(s)=s^{1-p}$, the flow (\ref{F-param}) becomes homogeneous and was considered in \cite{Andrews1998,Bryan2018d,Chow1997b,Gerhardt2014,Ivaki2019,Ivaki2016b,Ivaki2015c,Ivaki2014,Ivaki2013a,Li2010,Scheuer:07/2016,Schnuerer:/2006,Stancu2012}. However, when it comes to non-homogeneous flows, the literature on geometric flows is not very rich and there are few works in this direction; e.g., \cite{Bertini2018,Chow1998,Chow1997b,Chou2000,Kroner2017a,Li2019a,Liu2020}. Since the Orlicz-Minkowski problem admits solutions in the non-homogeneous case, it is desirable to remove the homogeneity assumption in the flow. The conditions we place on \(f,\varphi\) are similar to those used previously in the literature on the Orlicz--Minkowski problem mentioned in the introduction while some are new (\Cref{main remark1} and \Cref{main remark2} below).
\begin{theorem}\label{main theorem1}
Suppose
\begin{enumerate}
  \item $f:\Sp \to (0,\infty)$ is smooth and even, $f(u)=f(-u)\quad \forall u\in\Sp$,
  \item $\varphi:(0,\infty)\to (0,\infty)$ is smooth,
  \item either \begin{description}
          \item[a] $\phi(s)=\int_0^s\frac{1}{\varphi(t)}dt$ exists for all $s>0,$ and $\lim\limits_{s\to\infty}\phi(s)=\infty,$
          \item[b] or $\phi(s)=\int_1^s\frac{1}{\varphi(t)}dt$ satisfies $\lim\limits_{s\to\infty}\phi(s)=\infty,$
          and for some $\hat{C}_1>0,$ we have
           \[\int_{\Sp}\frac{\phi(|\langle u,\theta\rangle|)}{f}d\theta\geq -\hat{C}_1\quad \forall u\in\Sp,\]
           \item[c] or $\phi(s)=\int_s^{\infty}\frac{1}{\varphi(t)}dt$ exists for all $s>0,$ 
           and for some $q\in (-n-1,0)$, we have
           \[\limsup_{s\to0^+}\frac{\phi(s)}{s^q}<\infty,\]
          \item[d] or $\phi(s)=\int_s^{\infty}\frac{1}{\varphi(t)}dt$ exists for all $s>0,$ $\lim\limits_{s\to0^+}\phi(s)=\infty,$
and for some $\hat{C}_2>0,$ we have
           \[\int_{\Sp}\frac{\phi(|\langle u,\theta\rangle|)}{f}d\theta\leq \hat{C}_2\quad \forall u\in\Sp.\]
        \end{description}
\end{enumerate}
In the cases, (3-a), (3-b), and (3-c), let $M_0$ be $o$-symmetric. In the case (3-d), we choose an $o$-symmetric $M_0$ such that
\[\int_{\Sp}\frac{\phi(h_{M_0})}{f}d\theta>\hat{C}_2.\]
Then there exists a smooth, strictly convex solution $M_t$ to (\ref{F-param}) and it subconverges in $C^{\infty}$ to an $o$-symmetric, smooth, strictly convex solution of the regular even Orlicz--Minkowski problem.
\end{theorem}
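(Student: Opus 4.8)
The plan is to follow the standard parabolic-flow approach to Minkowski-type problems: establish long-time existence of the normalized flow \eqref{nf} by proving uniform a priori bounds on the support function and its derivatives, then extract a subsequential limit and identify it as a stationary solution. The first move is to reduce everything to the scalar PDE \eqref{nf} on $\Sp$; by the standard correspondence between strictly convex hypersurfaces and their support functions, it suffices to show $h(\cdot,t)$ stays in $C^{\infty}$ with uniform two-sided bounds $0<c\le h\le C$ and a uniform bound $\sigma_n = \det(h_{ij}+\delta_{ij}h)$ bounded above and below (equivalently, principal radii pinched), since then Krylov--Safonov and Schauder theory upgrade to uniform $C^{\infty}$ estimates and the flow exists for all time. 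The $o$-symmetry of $M_0$ is preserved by the flow (the equation \eqref{nf} is invariant under $u\mapsto-u$ because $f$ is even), so $h(\cdot,t)$ remains an even function — this is what makes the a priori bounds possible, via the interplay of the even assumption with a Blaschke--Santal\'o-type or integral argument.

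The core of the argument is a monotone quantity. I would look for a functional — the natural candidate is an Orlicz-type entropy $\mathcal{E}(t)=\int_{\Sp}\frac{\phi(h)}{f}\,d\theta$ (with $\phi$ as in hypothesis (3)) together with the enclosed volume $V(t)=\frac1{n+1}\int_{\Sp} h\,\sigma_n\,d\theta$ — and show that the global term $\zeta(t)$ is exactly chosen so that $V$ is constant along the flow while $\mathcal{E}$ is monotone (say nondecreasing). Indeed $\frac{d}{dt}\int h\sigma_n\,d\theta$ computes via $\partial_t h$ and the divergence-structure identity $\int (\partial_t h)\sigma_n = \int h\, \partial_t\sigma_n$ to give $\propto \int (fh\varphi(h)\sigma_n - \zeta h)\sigma_n\,d\theta$, and the definition of $\zeta$ (translated back from \eqref{F-param}) forces this to vanish; meanwhile $\frac{d}{dt}\mathcal{E}=\int\frac{h\varphi(h)\sigma_n-\zeta h/f}{\varphi(h)}\cdot\frac1{\text{(appropriate factor)}}$... more precisely $\frac{d}{dt}\mathcal{E}=\int \frac{\partial_t h}{f\varphi(h)}d\theta = \int h\sigma_n\,d\theta - \zeta\int\frac{h}{f\varphi(h)}d\theta$, and the sign is controlled because $\zeta$ is a ratio of exactly these two integrals (by AM/weighting). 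One then uses boundedness of $V$ plus monotonicity of $\mathcal{E}$, in each of the four cases (3-a)--(3-d), to trap $\min h$ away from $0$ and $\max h$ away from $\infty$: case (a) uses $\phi\to\infty$ at $\infty$ and $V$ bounded to cap $\max h$ (the $o$-symmetry gives $h(u)\ge \max h \cdot |\langle u,\theta\rangle|$ for $\theta$ the direction achieving the max, hence $\mathcal{E}\ge \int \phi(\max h|\langle u,\theta\rangle|)/f$, which blows up), while a reverse argument using $V$ bounded below and the isoperimetric inequality caps $\min h$; cases (b),(c),(d) are the borderline integrability regimes where the extra integral hypotheses on $\int \phi(|\langle u,\theta\rangle|)/f\,d\theta$ and the growth condition $\phi(s)\lesssim s^q$ with $q>-n-1$ are precisely what is needed to run the same trapping, and in case (d) the initial condition $\mathcal{E}(0)>\hat C_2$ is needed so the monotone $\mathcal{E}$ cannot drift into the bad region.

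With $0<c\le h\le C$ in hand, the next step is the upper bound on the Gauss curvature $\mathcal{K}$ of $M_t$ (equivalently a lower bound on $\sigma_n$), obtained by a maximum-principle argument applied to an auxiliary function like $\mathcal{K}/(h-\varepsilon)$ or $\log\mathcal{K} - A\log(h-\text{const})$ along the evolution, using the already-established $C^0$ bounds to absorb zeroth-order terms; this is the standard Tso-type / Chow-type estimate adapted to the non-homogeneous, anisotropic setting, and the evolution of $\mathcal{K}$ under \eqref{F-param} must be computed. The lower bound on $\mathcal{K}$ (upper bound on $\sigma_n$, i.e. the principal radii are bounded above) then follows either from a similar auxiliary-function argument or, more cheaply, by bounding the principal radii: $\sigma_n$ bounded below together with an upper bound on each $\kappa_i^{-1}$ coming from the constraint that the body sits between two balls of radii $c$ and $C$ and the bound on $\mathcal K$ from above — here one typically needs a second maximum-principle estimate on the largest principal radius. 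Once $\sigma_n$ is pinched in $[1/\Lambda,\Lambda]$, \eqref{nf} is uniformly parabolic with bounded measurable coefficients; Krylov--Safonov gives $C^{2,\alpha}$, Schauder bootstraps to $C^{\infty}$, all uniform in $t$, so $T=\infty$. Finally, since $V$ is constant and $\mathcal{E}$ is monotone and bounded, $\frac{d}{dt}\mathcal{E}\to 0$ along a subsequence $t_j\to\infty$, which forces $\partial_t h\to 0$ in $L^2$ and then (by the uniform estimates and interpolation) in $C^{\infty}$; the $C^{\infty}$-subconvergent limit $h_\infty$ is even, smooth, strictly convex and satisfies $f h_\infty\varphi(h_\infty)\sigma_n(h_\infty) = \zeta_\infty h_\infty$, i.e. $f\varphi(h_\infty)\sigma_n(h_\infty)=\gamma$ with $\gamma=\zeta_\infty>0$, which is exactly \eqref{orlicz prob}. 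The main obstacle I expect is the $C^0$ bound: getting a \emph{uniform} two-sided bound on $h$ in the four delicate cases (3-a)--(3-d) — especially the borderline cases (b) and (d) where $\phi$ is not bounded near $0$ or the integrals only barely converge — requires carefully combining the monotone entropy, the conserved volume, the $o$-symmetry-based pointwise lower bound $h(u)\ge h(\theta)|\langle u,\theta\rangle|$, and the precise integral hypotheses; after that, the curvature estimates and the convergence are comparatively routine adaptations of known techniques.
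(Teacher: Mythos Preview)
Your overall plan matches the paper's: reduce to the scalar flow \eqref{nf}, use the pair $(V,\mathcal{E})$ for $C^0$ control, then run maximum-principle arguments for $\sigma_n$ bounds, bootstrap, and extract a stationary subsequential limit. However, you have the roles of $V$ and $\mathcal{E}$ reversed, and this is not cosmetic. Along \eqref{nf} one has
\[
\frac{d}{dt}\mathcal{E}=\int_{\Sp}\frac{\partial_t h}{f\varphi(h)}\,d\theta=\int_{\Sp} h\sigma_n\,d\theta-\zeta\int_{\Sp}\frac{h}{f\varphi(h)}\,d\theta=0
\]
(your own computation already shows this---the ``sign controlled by AM/weighting'' remark is a misreading of an exact cancellation), while
\[
\frac{d}{dt}V=\int_{\Sp}\sigma_n\,\partial_t h\,d\theta=\int_{\Sp}\frac{(\partial_t h)^2}{fh\varphi(h)}\,d\theta\geq 0,
\]
so $\mathcal{E}$ is conserved and $V$ is nondecreasing, not the other way round.

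This swap breaks your $C^0$ argument as written. The upper bound on $\max h$ in each of (3-a)--(3-d) comes from the \emph{constancy} of $\mathcal{E}$: with $h(u)\geq R|\langle u,\theta_0\rangle|$ one compares $\mathcal{E}(t)=\mathcal{E}(0)$ against an integral involving $\phi(R|\langle u,\theta_0\rangle|)$, and the various hypotheses on $\phi$ force $R$ bounded. If $\mathcal{E}$ were merely monotone with no a priori ceiling, this step fails. Conversely, the lower bound on $\min h$ uses $V(t)\geq V(0)>0$ together with $V\leq w_-w_+^n$ and the already-obtained bound on $w_+$; a constant $V$ would also work here, but the point is that nondecreasing is what you actually get. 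For the convergence step, it is the monotone bounded quantity $V$ (not $\mathcal{E}$) whose time-derivative $\int (\partial_t h)^2/(fh\varphi)$ must tend to zero along a subsequence. Once you correct this swap, the rest of your outline---Tso-type estimates for $\sigma_n$, Krylov--Safonov/Schauder bootstrap, and subsequential convergence to a solution of \eqref{orlicz prob}---is exactly the paper's route.
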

\begin{remark}\label{main remark1}~
\begin{enumerate}
  \item When $\varphi(s)=s^{1-\ell},$ assumptions (3-a), (3-b), (3-c), (3-d) are satisfied, in order, for $\ell>0$, $\ell=0$, $\ell\in (-n-1,0)$, $-1<\ell<0$. To see this, note that
\begin{align*}
&\int_{\Sp} \log|x_i|d\theta>-\infty,\\
&\int_{\Sp}\frac{1}{|x_i|^\ell}d\theta<\infty\quad\mbox{for}\quad 0<\ell<1,
\end{align*}
where $(x_1,\ldots,x_{n+1})$ denotes the Euclidean coordinates.
Hence, Theorem \ref{main theorem1} generalizes \cite[Thm. 1]{Bryan2018d} to the Orlicz setting.
\item Since $\phi$ as defined in (3-a) is increasing, we have
\[\int_{\Sp}\frac{\phi(|\langle u,\theta\rangle|)}{f}d\theta <\infty\quad \forall u\in \Sp.\]
Therefore, the integral conditions in (3-b) and (3-d) compared to the condition (3-a) are not more restrictive.
  \item The cases (3-b) and (3-d) yield new existence results for the regular even Orlicz--Minkowski problem, while the existence results corresponding to (3-a) and (3-c) can be deduced from \cite{Bianchi2019b,Haberl2010}.
\item Due to the presence of $\varphi$ (possibly non-homogenous) and $\zeta$ in the speed of the flow (\ref{F-param}), we could not employ the method of \cite{Andrews1997a} to promote subconvergence to full convergence.
\end{enumerate}
\end{remark}
We slightly modify the flow (\ref{F-param}) to treat a class of regular Orlicz--Minkowski problems without the assumption that $f$ is even, i.e., we no longer assume \[f(u)=f(-u)\quad \forall u\in\Sp.\]
In order to serve this purpose, we consider the flow
\begin{align}\label{F-param2}
\left\{
  \begin{array}{ll}
  \partial_tx= f (\nu)\langle x,\nu\rangle\frac{\varphi(\langle x,\nu\rangle)}{\mathcal{K}}\nu- x; \\
   x(\cdot,0)=x_0(\cdot),
  \end{array}
\right.
\end{align}
where again $x_0:M^n\to \Euc$ is a smooth parametrization of a closed, strictly convex hypersurface $M_0$ with the origin of $\Euc$ in its interior.
Here compared to (\ref{F-param}), we dropped the $\zeta$-factor. In the appendix, we will use another curvature flow without a global term  (\ref{F-param4}) to treat a class of $L_p$-Minkowski-Christoffel type problems. See also \cite{BIS2020,Gerhardt1996,Gerhardt2003a,Gerhardt2006b,Gerhardt2008} for flows without global terms.
\begin{theorem}\label{main theorem2}
Suppose \begin{enumerate}
  \item $\varphi:(0,\infty)\to (0,\infty)$ is smooth,
  \item $\limsup\limits_{s\to\infty}s^n\varphi(s)<\frac{1}{f}<\liminf\limits_{s\to 0^+} s^n\varphi(s).$
\end{enumerate}
Then there exists a smooth, strictly convex solution $M_t$ of (\ref{F-param2}) and it converges in $C^{\infty}$ to a smooth, strictly convex solution of (\ref{orlicz prob}) with positive support function and constant $\gamma=1.$
\end{theorem}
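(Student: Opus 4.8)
The plan is to work with the support function. As in \eqref{nf} with $\zeta$ replaced by $1$, along \eqref{F-param2} the support functions $h(\cdot,t)$ of the $M_t$ solve
\begin{equation}
\partial_t h=f\,h\,\varphi(h)\,\sigma_n(h_{ij}+\delta_{ij}h)-h,\qquad h(\cdot,0)=h_{M_0},
\end{equation}
a scalar equation on $\Sp$ that is parabolic exactly while the radius-of-curvature matrix $r_{ij}:=h_{ij}+\delta_{ij}h$ is positive definite, since there $\partial\sigma_n/\partial r_{ij}=\sigma_n(r^{-1})^{ij}>0$. Short-time existence of a smooth, strictly convex solution on a maximal interval $[0,T)$ is standard, and a stationary solution is precisely a positive solution of \eqref{orlicz prob} with $\gamma=1$. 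It therefore suffices to establish uniform a priori $C^0$, $C^1$, $C^2$ bounds --- rendering the equation uniformly parabolic with bounded coefficients --- deduce via Krylov--Safonov and Schauder estimates that $T=\infty$ with uniform $C^\infty$ bounds, and then extract convergence.

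For the $C^0$ bound, which is where hypothesis $(2)$ is used, write $Q:=f\varphi(h)\sigma_n$, so $\partial_t h=h(Q-1)$. At a spatial maximum of $h(\cdot,t)$ one has $(h_{ij})\le0$, hence $\sigma_n\le h^n$, so $\frac{d}{dt}\max_{\Sp}h\le h(f\,h^n\varphi(h)-1)$ there; by $(2)$ (read with $\sup_{\Sp}f$ if $f$ is non-constant) there are $S_+<\infty$, $\delta>0$ with $f\,s^n\varphi(s)\le1-\delta$ for $s\ge S_+$, so $\max_{\Sp}h$ decreases whenever it exceeds $S_+$ and stays uniformly bounded. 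Symmetrically $\sigma_n\ge h^n$ at a spatial minimum, and the lower half of $(2)$ gives $h\ge c>0$ uniformly. The gradient bound $|\nabla h|\le C$ is then automatic, since $M_t\subset B_C(o)$ and $|x|^2=h^2+|\nabla h|^2$ at the boundary point with outer normal $u$.

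The substantive step is the $C^2$ bound, which I expect to be the main obstacle. I would first run a Tso-type auxiliary quantity such as $W:=f\,h\,\varphi(h)\,\sigma_n/(h-c/2)$ (admissible since $h\ge c$) through the maximum principle to obtain an upper bound $\sigma_n\le C_1$; the non-homogeneity of $\varphi$ produces only lower-order terms in $\varphi',\varphi''$, controlled by the $C^0$ and $C^1$ bounds. Next, applying the maximum principle to the largest principal curvature (equivalently a tensor maximum principle for $r_{ij}$, using the concavity properties of the operator) should give a lower bound $r_{ij}\ge\rho\,\delta_{ij}$; together with $\prod_i r_i=\sigma_n\le C_1$ this yields two-sided bounds on the $r_i$, hence uniform parabolicity, after which Krylov--Safonov, Schauder and bootstrapping give uniform $C^\infty$ bounds and $T=\infty$. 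Unlike in the homogeneous $L_p$ flows there is no scaling to exploit, so the gradient and zeroth-order terms generated by $\varphi$ in the evolution of the curvature must be absorbed directly.

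For convergence I would use a monotonicity formula. With $\phi$ an antiderivative of $1/\varphi$ (well defined on the range $[c,C]$ of $h$), set
\begin{equation}
\Psi(t):=\int_{\Sp}\frac{\phi(h)}{f}\,d\theta-V(M_t),\qquad V(M_t)=\frac1{n+1}\int_{\Sp}h\,\sigma_n\,d\theta.
\end{equation}
From $\frac{d}{dt}\!\int_{\Sp}\phi(h)f^{-1}d\theta=\int_{\Sp}(h\sigma_n-\frac{h}{f\varphi(h)})\,d\theta$ and the first variation of volume $\frac{d}{dt}V=\int_{\Sp}(\partial_t h)\sigma_n\,d\theta$ one gets
\begin{equation}
\frac{d}{dt}\Psi(t)=-\int_{\Sp}\frac{h}{f\,\varphi(h)}\big(f\varphi(h)\sigma_n-1\big)^2\,d\theta\le0,
\end{equation}
with equality exactly at stationary solutions. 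Since $\Psi$ is bounded below by the $C^0$ bounds, $\Psi(t)$ decreases to a limit and $\int_0^\infty\!\!\int_{\Sp}\frac{h}{f\varphi(h)}(Q-1)^2\,d\theta\,dt<\infty$; with the uniform $C^\infty$ bounds this forces $\|f\varphi(h)\sigma_n-1\|_{C^0}\to0$ along some $t_k\to\infty$, so a subsequence of the $M_{t_k}$ converges in $C^\infty$ to a smooth, strictly convex $M_\infty$ with $h_{M_\infty}\ge c>0$ solving $f\varphi(h)\sigma_n=1$, i.e.\ \eqref{orlicz prob} with $\gamma=1$. To promote this to convergence of the full flow I would argue that the $\omega$-limit set is nonempty, compact, connected, contained in a level set of $\Psi$ and made up of stationary solutions, hence a single point --- whereas for \Cref{main theorem1} only subconvergence is available (cf.\ \Cref{main remark1}(4)).
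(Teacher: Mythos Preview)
Your overall architecture matches the paper's: the $C^0$ bound via hypothesis (2) at the max/min of $h$ is exactly the paper's argument, the $C^1$ bound from $|x|^2=h^2+|\bar\nabla h|^2$ is standard, and your Lyapunov functional $\Psi$ is the negative of the paper's $\mathcal{F}=V-\mathcal{E}$ with the same dissipation identity. For the $C^2$ step the paper uses different auxiliary functions (it first bounds $\sigma_n$ from \emph{below} via $\chi=\log(\Theta\sigma_n)-A\rho^2/2$, then from above via $f\varphi\sigma_n/(1-\ep\rho^2/2)$, and finally the principal curvatures via $\log(\|W\|/h^\alpha)$), but your Tso-type route is a legitimate alternative; you may find you need a lower bound on $\sigma_n$ before the tensor maximum-principle step, as the paper does.

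The genuine gap is in your full-convergence step. The implication ``$\omega$-limit set is nonempty, compact, connected, contained in a level set of $\Psi$, and consists of stationary solutions, \emph{hence} a single point'' is false in general: a connected continuum of equilibria at a common energy level is not excluded by those properties, and no uniqueness theorem for $f\varphi(h)\sigma_n=1$ is available under the stated hypotheses. Indeed your argument, as written, would apply verbatim to the flow \eqref{F-param} (where $V$ is a strict Lyapunov function and $\mathcal{E}$ is conserved), yet the paper obtains only subconvergence there---so the argument must be too weak. The paper closes this gap by exploiting the gradient structure of \eqref{F-param2}: one has $\nabla^{L^2}\mathcal{F}=\sigma_n-1/(f\varphi(h))=\partial_t h/(fh\varphi(h))$, so by the $C^0$ bounds $\tfrac{d}{dt}\mathcal{F}\ge c_0\,\|\nabla^{L^2}\mathcal{F}\|_{L^2(\Sp)}\,\|\partial_t h\|_{L^2(\Sp)}$, and then the argument of Andrews and Guan upgrades subconvergence to full $C^\infty$ convergence. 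This gradient interpretation is precisely what the global factor $\zeta$ in \eqref{F-param} obstructs, which is the actual reason for the dichotomy you noted.
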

\begin{remark}\label{main remark2}~
\begin{enumerate}
  \item In the case special case $\varphi(s)=s^{1-p}$, Theorem \ref{main theorem2} finds the solutions of the regular $L_p$-Minkowski problems for $p>n+1$.
  \item In \cite[p. 42]{Chou2000}, using a logarithmic curvature flow, an existence result was obtained under the assumption (2) and that $\varphi$ is non-increasing. See also \cite{Liu2020}, where a similar result was recently obtained.
\end{enumerate}
\end{remark}
\subsection{General measures}
\begin{definition}
A Borel measure $\mu$ on $\Sp$ is said to be even if it assumes the same values on antipodal Borel sets. We say $\mu$ is invariant under a subgroup $G$ of the orthogonal group $O(n+1)$ if $\mu(g\omega)=\mu(\omega)$ for all Borel sets $\omega\subseteq\Sp.$
\end{definition}
\begin{theorem}\label{main theoremHLYZ}
Suppose
\begin{enumerate}
  \item $\varphi:(0,\infty)\to (0,\infty)$ is a continuous function,
  \item $\phi(s)=\int_0^s\frac{1}{\varphi(t)}dt$ exists for all $s>0$ and $\lim\limits_{s\to\infty}\phi(s)=\infty.$
\end{enumerate}
Let $\mu$ be a finite even Borel measure on $\Sp$ whose support is not contained on a great subsphere.
Then there exists an $o$-symmetric convex body such that
\begin{align*}
\varphi(h_K)dS_K=\gamma d\mu \quad\hbox{for some constant}~\gamma>0.
\end{align*}
\end{theorem}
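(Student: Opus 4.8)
The plan is a parabolic approximation of \eqref{measure orlicz prob}: regularize $\mu$ to measures with smooth positive even densities, solve the associated regular even Orlicz--Minkowski problems via \Cref{main theorem1}(3-a), and pass to the limit with the Blaschke selection theorem and the weak continuity of surface area measures. Note that hypotheses (1)--(2) of \Cref{main theoremHLYZ} are precisely hypotheses (2) and (3-a) of \Cref{main theorem1}. Concretely, I would mollify $\mu$ on $\Sp$, symmetrize under the antipodal map (harmless since $\mu$ is even), and add a vanishing multiple of the spherical Lebesgue measure, producing finite even Borel measures $\mu_k$ with smooth positive density $1/f_k$, where $f_k\colon\Sp\to(0,\infty)$ is smooth and even, such that $\mu_k\to\mu$ weakly-$\ast$ and $\mu_k(\Sp)\to\mu(\Sp)>0$ (positivity because $\operatorname{supp}\mu$ lies in no great subsphere). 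Since $\phi$ in (2) is a continuous increasing bijection of $[0,\infty)$, I can pick, for each $k$, an origin-centered round sphere $M_0^{(k)}=\rho_k\Sp$ with $\phi(\rho_k)\,\mu_k(\Sp)=1$; then $\rho_k$ stays in a fixed compact subinterval of $(0,\infty)$, so $V(M_0^{(k)})\ge v_0>0$ independently of $k$. Applying \Cref{main theorem1}(3-a) with datum $f_k$ and initial body $M_0^{(k)}$ yields a smooth, strictly convex, $o$-symmetric body $K_k$ and a constant $\gamma_k>0$ with $\varphi(h_{K_k})\,dS_{K_k}=\gamma_k\,d\mu_k$, i.e. solving \eqref{orlicz prob} for $f_k$. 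Along \eqref{nf} the quantity $\int_{\Sp}\phi(h)/f_k\,d\theta$ is conserved (differentiate and use the definition of $\zeta$), and the enclosed volume is non-decreasing (for \eqref{F-param} one computes $\tfrac{d}{dt}V=\int_{\Sp} f_k h\varphi(h)\sigma_n^2\,d\theta-\zeta\int_{\Sp} h\sigma_n\,d\theta\ge 0$ by the Cauchy--Schwarz inequality, with equality exactly at stationary solutions). Passing these to the $C^\infty$-subsequential limit gives $\int_{\Sp}\phi(h_{K_k})\,d\mu_k=1$ and $V(K_k)\ge V(M_0^{(k)})\ge v_0$.

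Next I would establish $k$-uniform geometric estimates. For the circumradius $R_k=\max_{\Sp}h_{K_k}$: choosing $x_k\in K_k$ with $|x_k|=R_k$, $o$-symmetry gives $h_{K_k}(u)\ge R_k\lvert\langle x_k/R_k,u\rangle\rvert$, so, since $\phi\ge 0$ is increasing,
\[1=\int_{\Sp}\phi(h_{K_k})\,d\mu_k\ \ge\ \phi(\delta_0 R_k)\,\mu_k\bigl(\{u\in\Sp:\lvert\langle x_k/R_k,u\rangle\rvert\ge\delta_0\}\bigr).\]
A routine weak-$\ast$ convergence argument, exploiting that $\operatorname{supp}\mu$ is in no great subsphere, produces $\delta_0,c_0>0$ and $k_0$ with $\mu_k(\{u:\lvert\langle v,u\rangle\rvert\ge\delta_0\})\ge c_0$ for all $v\in\Sp$, $k\ge k_0$; since $\phi(s)\to\infty$, this forces $R_k\le C$. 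For the inradius $r_k=\min_{\Sp}h_{K_k}$: by $o$-symmetry $K_k$ lies in the slab of half-width $r_k$ normal to a direction realizing the minimum, hence $v_0\le V(K_k)\le C' r_k R_k^{\,n}\le C'' r_k$, giving $r_k\ge c>0$. Finally $0<c\le h_{K_k}\le C$ and continuity of $\varphi$ bound $\varphi(h_{K_k})$ and the surface area $S_{K_k}(\Sp)$ between positive constants, whence integrating $\varphi(h_{K_k})\,dS_{K_k}=\gamma_k\,d\mu_k$ bounds $\gamma_k$ between positive constants.

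To conclude, by Blaschke selection a subsequence of $K_k$ converges in the Hausdorff metric to an $o$-symmetric convex body $K$ with $c\le h_K\le C$, so $o\in\operatorname{int}K$, and $\gamma_k\to\gamma>0$ along a further subsequence. Since $h_{K_k}\to h_K$ uniformly with all values in $[c,C]$, $\varphi(h_{K_k})\to\varphi(h_K)$ uniformly; together with the weak continuity $S_{K_k}\to S_K$ of surface area measures under Hausdorff convergence \cite{Schneider2013a} and $\mu_k\to\mu$ weakly-$\ast$, passing to the limit in $\varphi(h_{K_k})\,dS_{K_k}=\gamma_k\,d\mu_k$ yields $\varphi(h_K)\,dS_K=\gamma\,d\mu$, which is the assertion.

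I expect the main obstacle to be the $k$-uniform lower bound on $r_k$, i.e. preventing the approximating bodies from collapsing: the a priori bounds supplied by \Cref{main theorem1} degenerate as $f_k$ degenerates (and a generic even $\mu$ may vanish on open sets), so one cannot simply quote them uniformly. The remedy is to exploit a \emph{monotone} quantity of the flow — the non-decreasing enclosed volume along \eqref{F-param} — in tandem with the freedom to prescribe the round initial bodies $M_0^{(k)}$, a choice made possible by the \emph{conserved} quantity $\int_{\Sp}\phi(h)/f_k\,d\theta$ which fixes the normalization. The companion point, the uniform non-concentration of the $\mu_k$ away from great subspheres (used for the upper bound on $R_k$), is elementary but must not be skipped.
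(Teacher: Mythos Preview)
Your approach is essentially the paper's: approximate $\mu$ by measures with smooth positive even densities, run the flow of \Cref{main theorem1}(3-a) from a sphere (the paper simply uses the unit sphere; your normalized $\rho_k$-sphere is equivalent), and use the conserved quantity $\mathcal{E}$ together with volume monotonicity to obtain $k$-uniform $C^0$ bounds before passing to the limit via Blaschke selection and weak continuity of $S_K$. The paper packages your elementary circumradius bound through the Orlicz norm, deducing $R\,\min_{v}\|h_{\bar v}\|_{\phi,\mu_{f_i}}\le 1$ (with $h_{\bar v}$ the support function of the segment $[-v,v]$) and citing \cite[Cor.~3.7]{Huang2012} for the uniform positivity of that minimum; your direct non-concentration argument is the same content unwrapped. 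One point you should address: \Cref{main theorem1} assumes $\varphi$ is smooth, whereas \Cref{main theoremHLYZ} only assumes continuity, so an additional (routine) approximation of $\varphi$ is needed --- the paper handles this by pointing to the end of the proof of \Cref{main theoremGeneral}.
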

This theorem was first proved in \cite[Thm.~2]{Haberl2010} and contains the general even $L_p$-Minkowski problem for $p>0$. The method of HLYZ is a variational argument that finds a minimizing body of a suitable functional in a \textit{certain} class of origin-symmetric ($o$-symmetric) convex bodies. We treated the regular version of this theorem by using the curvature flow in Theorem \ref{main theorem1}. The general case will be treated by a simple parabolic approximation, cf., Section \ref{subsec approx}.

In the next theorem, the assumption that the measure $\mu$ is even is dropped. To prove this theorem, we use the flow (\ref{F-param}) and a version of Chou--Wang's approximation argument \cite{Chou2000} adapted to the parabolic setting \cite{Haodi2019} for treating the general $L_p$-Minkowski problem.
\begin{theorem}\label{main theoremGeneral}
Suppose
\begin{enumerate}
  \item $\varphi:(0,\infty)\to (0,\infty)$ is continuous and $\lim\limits_{s\to0^+}\varphi(s)=\infty,$
  \item $\phi(s)=\int_0^s\frac{1}{\varphi(t)}dt$ exists for all $s>0$ and $\lim\limits_{s\to\infty}\phi(s)=\infty.$
\end{enumerate}
Let $\mu$ be a finite Borel measure on $\Sp$ whose support is not contained in a closed hemisphere.
Then there exist $K\in \mathrm{K}_o$ and a constant $\gamma>0$ such that
\begin{align*}
 dS_K= \frac{\gamma}{\varphi(h_K)}d\mu.
\end{align*}
Moreover, if $\mu$ is invariant under a closed group $G\subset O(n+1),$ then $K$ can be chosen to be invariant under $G.$
\end{theorem}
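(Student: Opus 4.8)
The plan is to deduce \Cref{main theoremGeneral} from the regular case handled by the flow \eqref{F-param} through a parabolic version of Chou--Wang's approximation scheme. First I would approximate the measure: mollify $\mu$ to obtain smooth positive densities $f_k\colon\Sp\to(0,\infty)$ with $f_k\,d\theta\rightharpoonup\mu$ weakly-$*$; when $\mu$ is invariant under the closed group $G\subset O(n+1)$, replace $f_k$ by its Haar average $u\mapsto\int_G f_k(gu)\,dg$, which is again smooth, positive, $G$-invariant, and still converges weakly to $\mu$. Since $\operatorname{supp}\mu$ is not contained in a closed hemisphere, the continuous positive function $v\mapsto\int_{\Sp}\langle u,v\rangle_+\,d\mu(u)$ has a positive minimum $c_0$ on $\Sp$, so there is $k_0$ with $\int_{\Sp}\langle u,v\rangle_+\,f_k\,d\theta\ge c_0/2$ for all $v\in\Sp$ and all $k\ge k_0$; I would retain only this tail.

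Next, for each $k$ I would solve the regular Orlicz--Minkowski problem $f_k\,\varphi(h_k)\,\sigma_n(h_k)=\gamma_k$ by running the flow \eqref{F-param} with density $f=f_k$, and, in the equivariant case, with a $G$-invariant initial hypersurface (a round sphere), so that uniqueness of the parabolic problem keeps the flow $G$-invariant. Hypotheses (1)--(2) are precisely the Orlicz analogue of the ``$p>1$'' condition --- for $\varphi(s)=s^{1-p}$ they hold exactly when $p>1$ --- and the divergence $\lim_{s\to0^+}\varphi(s)=\infty$ supplies a barrier keeping the origin uniformly in the interior along the flow \emph{without} any evenness of $f_k$, so the convergence analysis of the flow \eqref{F-param} from the preceding sections applies and yields a smooth, strictly convex $K_k\in\mathrm{K}_o$ ($G$-invariant when applicable) with $\varphi(h_{K_k})\,dS_{K_k}=\gamma_k\,f_k\,d\theta$. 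Integrating this identity against $1$ and against $h_{K_k}$ gives $|\partial K_k|=\gamma_k\int_{\Sp}\varphi(h_{K_k})^{-1}f_k^{-1}\,d\theta$ and $(n+1)\,|K_k|=\gamma_k\int_{\Sp}h_{K_k}\,\varphi(h_{K_k})^{-1}f_k^{-1}\,d\theta$, the identities that drive the a priori bounds.

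Then I would establish, uniformly in $k\ge k_0$: a diameter bound $\operatorname{diam}K_k\le C$, and a non-degeneracy bound ruling out collapse of $K_k$ to a lower-dimensional set as well as the origin drifting to $\partial K_k$ (i.e.\ $\operatorname{dist}(o,\partial K_k)\ge c>0$). Given these, $h_{K_k}$ takes values in a fixed compact subset of $(0,\infty)$, hence $\varphi(h_{K_k})$ is pinched between positive constants, and the displayed integral identities force $0<c'\le\gamma_k\le C'$. With these uniform bounds in hand, Blaschke selection gives $K_k\to K$ in the Hausdorff sense along a subsequence, with $K\in\mathrm{K}_o$ full-dimensional (and $G$-invariant); then $h_{K_k}\to h_K$ uniformly with $\min_{\Sp}h_K>0$, so $\varphi(h_{K_k})\to\varphi(h_K)$ uniformly, while $S_{K_k}\rightharpoonup S_K$ by weak continuity of surface-area measures and $\gamma_k f_k\,d\theta\rightharpoonup\gamma\,\mu$ with $\gamma>0$. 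Passing to the limit in $\varphi(h_{K_k})\,dS_{K_k}=\gamma_k f_k\,d\theta$ yields $\varphi(h_K)\,dS_K=\gamma\,d\mu$, i.e.\ $dS_K=\gamma\,\varphi(h_K)^{-1}\,d\mu$, and $G$-invariance is inherited from the $K_k$.

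I expect the non-degeneracy estimate to be the main obstacle. Without evenness there is no symmetry pinning the origin, so the only leverage is the interplay of $\lim_{s\to0^+}\varphi(s)=\infty$ with the non-hemisphere condition on $\mu$: if $\min h_{K_k}\to0$ along a subsequence, then $\varphi(h_{K_k})\to\infty$ on a region that cannot shrink too fast, forcing $\sigma_n(h_{K_k})\to0$ there, and testing the equation against the linear functions $\langle\cdot,v\rangle_+$ then contradicts the uniform lower bound $\int_{\Sp}\langle u,v\rangle_+ f_k\,d\theta\ge c_0/2$; the diameter bound is obtained dually from the same ingredients. Turning this heuristic into genuinely $k$-independent estimates --- rather than bounds that deteriorate as $k\to\infty$ --- is the technical heart of the argument, and is exactly where the parabolic adaptation of Chou--Wang's scheme along the flow \eqref{F-param} (in the spirit of \cite{Haodi2019,Chou2000}) is needed.
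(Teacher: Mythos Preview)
Your overall strategy---approximate $\mu$ by smooth densities, solve the regular problem for each via the flow, then pass to the limit---matches the paper's, but there is a genuine gap in the step where you produce the regular solutions $K_k$. You assert that $\lim_{s\to0^+}\varphi(s)=\infty$ alone ``supplies a barrier keeping the origin uniformly in the interior along the flow'' for non-even $f_k$, so that the analysis of Section~\ref{reg est} applies. It does not. At a spatial minimum of $h$ one has $r_{ij}\ge \bar{g}_{ij}h_{\min}$, hence $\sigma_n\ge h_{\min}^{\,n}$, and the evolution gives
\[
\partial_t\log h_{\min}\ \ge\ f_{\min}\,\varphi(h_{\min})\,h_{\min}^{\,n}-\zeta(t).
\]
To prevent $h_{\min}\to0$ you would need $s^{n}\varphi(s)\to\infty$ as $s\to0^+$, which is far stronger than $\varphi(s)\to\infty$; for $\varphi(s)=s^{1-p}$ this holds only when $p>n+1$, whereas the theorem must cover all $p>1$. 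The $C^0$-estimates of Lemma~\ref{C0 estimate} use either $o$-symmetry (the cases of Theorem~\ref{main theorem1}) or the hypotheses of Theorem~\ref{main theorem2}; neither is available here, and the preceding sections do \emph{not} establish long-time existence of \eqref{F-param} for general $f$ under hypotheses (1)--(2) alone.

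The paper closes this gap with an extra layer of approximation (Proposition~\ref{prop} and Corollary~\ref{cor}): before running the flow it replaces $\varphi$ by $\varphi_\varepsilon$ with $\varphi_\varepsilon(s)=s^{-n-\varepsilon}$ on $(0,\varepsilon]$, so that $s^{n}\varphi_\varepsilon(s)=s^{-\varepsilon}\to\infty$ and the barrier genuinely works. One then lets $\varepsilon\to0$, using the Orlicz-norm argument \eqref{R}, Lemma~\ref{orlicz norm approx} and Lemma~\ref{key est} to get bounds on $w_\pm$ and $\gamma_\varepsilon$ that are uniform in $\varepsilon$, and only afterwards passes from the smooth densities to $\mu$. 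Your heuristic non-degeneracy argument for the limit bodies $K_k$ cannot substitute for this: without the $\varphi_\varepsilon$-modification the flow may fail to exist globally, so the $K_k$ are not produced in the first place.
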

This theorem first appeared in \cite{Huang2012} and for the special case $\varphi(s)=s^{1-p}$, it includes the $L_p$-Minkowski problem for $p>1$. In \cite{Huang2012}, the problem is first solved for discrete measures and then by approximating the general measure by discrete measures; cf., Section \ref{subsec approx} for the sketch and the full details. We solve it first in the regular case and then by approximating the general measures by the regular case. See also Wu--Xi--Leng \cite{Wu2019,Wu2018} for the discrete case of the Orlicz--Minkowski problem, and \cite{Jian2019} for an existence result of solutions to general Orlicz--Minkowski problem which contains the $L_p$-Minkowski problem for $0<p<1.$ Note that the role of the constant $\gamma$ is essential in general; see \cite{Yijing2015} for a non-existence result when $\gamma$ is dropped.
\section{Flow hypersurfaces}\label{reg est}
\subsection{Regularity estimates}
In the sequel, $\bar{g}$ and $\n$ denote respectively the standard round metric and the Levi-Civita connection of $\Sp$. The principal radii of curvature are the eigenvalues of the matrix
\begin{equation}\label{rij}
r_{ij}:=\n_i\n_j h+\bar{g}_{ij}h
\end{equation}
with respect to $\bar{g}$. For convenience, we put
\begin{align*}\eta(t)&=\left\{
          \begin{array}{ll}
            \zeta(t) & \hbox{flow}~(\ref{F-param}),\\
            1 & \hbox{flow}~(\ref{F-param2}),
          \end{array}
        \right.\\
 \Theta&= f  h\varphi(h),\quad \mathcal{L}=\partial_t-\Theta \sigma_n^{ij}\n_i\n_j, \quad \rho=\sqrt{h^2+|\n h|^2}.
\end{align*}
\begin{lemma}\label{ev eqs}
The following evolution equations hold along the flows.
\begin{align*}
\mathcal{L}h=&(1-n)\Theta \sigma_n+\Theta h\sigma_n^{ij}\bar{g}_{ij}-\eta h,\\
\mathcal{L}\frac{\rho^2}{2}=& (n+1)h\Theta \sigma_n+\sigma_n\bar{g}^{ij}\n_ih\n_j\Theta-\Theta\sigma_n^{ij}r_i^kr_{jk}-\eta \rho^2,\\
\mathcal{L}(\Theta\sigma_n)
=&\left(\frac{1}{h}+\frac{\varphi'(h)}{\varphi(h)}\right)(\Theta\sigma_n)^2 +\Theta^2\sigma_n\sigma_n^{ij}\bar{g}_{ij}\\
&-\eta\left(n+1+\frac{h\varphi'(h)}{\varphi(h)}\right)\Theta\sigma_n.
\end{align*}
\end{lemma}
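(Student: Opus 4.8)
The plan is to obtain all three evolution equations from the scalar equation \eqref{nf}, which already encodes both flows via $\partial_t h=\Theta\sigma_n-\eta h$, together with three standard facts about the elementary symmetric function $\sigma_n=\sigma_n(r_{ij})$ of the radii matrix $r_{ij}$ on $\Sp$: homogeneity, $\sigma_n^{ij}r_{ij}=n\sigma_n$; the Codazzi property of $r_{ij}$, namely that $\n_k r_{ij}$ is totally symmetric, which follows from the Ricci identity and the constant sectional curvature of $\Sp$; and the divergence-free identity $\n_i\sigma_n^{ij}=0$, whose consequence, combining the chain rule $\n_k\sigma_n=\sigma_n^{ij}\n_k r_{ij}$ with total symmetry, is $\sigma_n^{ij}\n_i r_{jk}=\n_k\sigma_n$. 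I would also record the two algebraic identities $\sigma_n^{ij}\n_i\n_j h=n\sigma_n-h\,\sigma_n^{ij}\bar g_{ij}$ (from $\n_i\n_j h=r_{ij}-\bar g_{ij}h$) and, by a one-line computation with no third derivatives, $\n_j\tfrac{\rho^2}{2}=r_{jk}\n^k h$.

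The equation for $\mathcal{L}h$ is then immediate on substituting $\partial_t h=\Theta\sigma_n-\eta h$ and $\sigma_n^{ij}\n_i\n_j h=n\sigma_n-h\,\sigma_n^{ij}\bar g_{ij}$ into $\mathcal{L}h=\partial_t h-\Theta\sigma_n^{ij}\n_i\n_j h$. For $\mathcal{L}\tfrac{\rho^2}{2}$, the time derivative is $h\,\partial_t h+\n^i h\,\n_i(\partial_t h)=h\Theta\sigma_n+\n^i h\,\n_i(\Theta\sigma_n)-\eta\rho^2$ since $\eta=\eta(t)$; for the elliptic part I would differentiate $\n_j\tfrac{\rho^2}{2}=r_{jk}\n^k h$ once more and contract with $\sigma_n^{ij}$, so that $\sigma_n^{ij}(\n_i r_{jk})\n^k h$ collapses to $\bar g^{ij}\n_i\sigma_n\,\n_j h$ and $\sigma_n^{ij}r_{jk}\n_i\n^k h$ becomes $\sigma_n^{ij}r_i^k r_{jk}-nh\sigma_n$ (using $\n_i\n^k h=r_i^k-\delta_i^k h$ and homogeneity). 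Assembling $\mathcal{L}\tfrac{\rho^2}{2}$, the term $\Theta\,\bar g^{ij}\n_i\sigma_n\,\n_j h$ produced by $\n_i(\Theta\sigma_n)=\sigma_n\n_i\Theta+\Theta\n_i\sigma_n$ cancels the corresponding term from $-\Theta\sigma_n^{ij}\n_i\n_j\tfrac{\rho^2}{2}$, leaving $\sigma_n\bar g^{ij}\n_i h\,\n_j\Theta$, and $nh\Theta\sigma_n+h\Theta\sigma_n=(n+1)h\Theta\sigma_n$, which is the stated formula.

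For $\mathcal{L}(\Theta\sigma_n)$, set $Q=\Theta\sigma_n$, so that $\partial_t h=Q-\eta h$. Differentiating $Q=f\,h\,\varphi(h)\,\sigma_n$ in $t$ gives $\partial_t Q=\tfrac{Q}{h}\bigl(1+\tfrac{h\varphi'(h)}{\varphi(h)}\bigr)\partial_t h+\Theta\,\partial_t\sigma_n$; writing $\partial_t\sigma_n=\sigma_n^{ij}\n_i\n_j(\partial_t h)+\sigma_n^{ij}\bar g_{ij}\,\partial_t h$, substituting $\partial_t h=Q-\eta h$, and using $\sigma_n^{ij}\n_i\n_j h=n\sigma_n-h\,\sigma_n^{ij}\bar g_{ij}$, the $\eta$-terms reorganize to $\partial_t\sigma_n=\sigma_n^{ij}\n_i\n_j Q-n\eta\sigma_n+Q\,\sigma_n^{ij}\bar g_{ij}$. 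Then in $\mathcal{L}Q=\partial_t Q-\Theta\sigma_n^{ij}\n_i\n_j Q$ the second-order terms cancel, leaving $\mathcal{L}Q=\tfrac{Q}{h}\bigl(1+\tfrac{h\varphi'(h)}{\varphi(h)}\bigr)(Q-\eta h)-n\eta Q+\Theta Q\,\sigma_n^{ij}\bar g_{ij}$; expanding $(Q-\eta h)$ and collecting the $\eta Q$-terms into the coefficient $n+1+\tfrac{h\varphi'(h)}{\varphi(h)}$ gives the claimed identity. The only real obstacle is the bookkeeping in this last step — tracking where each $\eta$-term lands, and combining the factor $1+h\varphi'/\varphi$ coming from $\partial_t(h\varphi(h))$ with the degree $n$ of $\sigma_n$ — but there is no analytic subtlety, and the Codazzi and divergence-free facts on $\Sp$ are classical.
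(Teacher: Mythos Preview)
Your proof is correct and follows essentially the same route as the paper's: all three identities are derived from $\partial_t h=\Theta\sigma_n-\eta h$, the relation $r_{ij}=\n_i\n_j h+\bar g_{ij}h$, and the $n$-homogeneity of $\sigma_n$. The only cosmetic difference is in the second equation, where the paper expands $\n_a\n_b\tfrac{\rho^2}{2}$ directly in a frame diagonalizing $r_{ij}$, whereas you first record $\n_j\tfrac{\rho^2}{2}=r_{jk}\n^k h$ and then differentiate once more, invoking the Codazzi symmetry to collapse $\sigma_n^{ij}\n_i r_{jk}$ to $\n_k\sigma_n$; both computations produce the same three terms $-nh\sigma_n+\n^m h\,\n_m\sigma_n+\sigma_n^{ab}r_a^m r_{bm}$ and are equivalent.
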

\begin{proof}
We have
$
\partial_{t}h=\Theta\sigma_{n}-\eta h
$
and hence the first equation follows from the $n$-homogeneity and \eqref{rij}.
For the second equation, in an orthonormal frame that diagonalizes $r_{ij},$
\begin{align*}
\n_a\n_b\frac{\rho^2}{2}=&h\n_a\n_bh+\n_ah\n_bh\\
&+\n^mh\n_a\n_b\n_mh+\n_a\n^mh\n_b\n_mh\\
=&h(r_{ab}-\bar{g}_{ab}h)+\n_ah\n_bh\\
&+\n^mh\n_a(r_{bm}-\bar{g}_{bm}h)\\
&+(r_{a}^m-\delta_{a}^mh)(r_{bm}-\bar{g}_{bm}h).
\end{align*}
Tracing with respect to $\sigma_n^{ab}$ gives
\begin{align*}
\sigma_n^{ab}\n_a\n_b\frac{\rho^2}{2}=&h(n\sigma_n-\sigma_n^{ab}\bar{g}_{ab}h)+\sigma_n^{ab}\n_ah\n_bh\\
&+\n^mh\n_m\sigma_n-\sigma_n^{ab}\n_ah\n_bh\\
&+\sigma_n^{ab}r_{a}^mr_{bm}+\sigma_n^{ab}\bar{g}_{ab}h^2-2nh\sigma_n\\
=&-nh\sigma_n+\n^mh\n_m\sigma_n+\sigma_n^{ab}r_{a}^mr_{bm}.
\end{align*}
Now the second evolution equation follows from
\begin{align*}
\partial_t\frac{\rho^2}{2}=&h\Theta\sigma_n-\eta\rho^2+\bar{g}^{ij}\sigma_n\n_ih\n_j\Theta+\bar{g}^{ij}\Theta\n_ih\n_j\sigma_n.
\end{align*}
Finally there holds
\begin{align*}\begin{split}
\partial_{t}(\Theta\sigma_{n})=&f\varphi\sigma_{n}\partial_{t}h+fh\varphi' \sigma_{n}\partial_{t}h+\Theta \sigma_{n}^{ij}(\n_{i}\n_{j}\partial_{t}h+\bar g_{ij}\partial_{t}h)\\
					=&f\varphi\sigma_{n}(\Theta\sigma_{n}-\eta h)+fh\varphi' \sigma_{n}(\Theta\sigma_{n}-\eta h)\\
					&+\Theta \sigma_{n}^{ij}(\n_{i}\n_{j}(\Theta\sigma_{n}-\eta h)+\bar g_{ij}(\Theta\sigma_{n}-\eta h))\\
					=&\Theta\sigma_{n}^{ij}\n_{i}\n_{j}(\Theta\sigma_{n})+f\varphi\sigma_{n}(\Theta\sigma_{n}-\eta h)\\
					&+fh\varphi'\sigma_{n} (\Theta\sigma_{n}-\eta h)+\Theta^2\sigma_{n}^{ij}\bar g_{ij} \sigma_{n}\\
					&-\eta\Theta\sigma_{n}^{ij}(\n_{i}\n_{j}h+\bar{g}_{ij}h).
\end{split}
\end{align*}
Hence,
\begin{align}\begin{split}
\mathcal{L}(\Theta\sigma_{n})=&\frac{1}{h}\Theta^{2}\sigma_{n}^{2}
+\frac{\varphi'}{\varphi}\Theta^{2}\sigma_{n}^{2}+\sigma_{n}^{ij}\bar g_{ij}\Theta^{2}\sigma_{n}\\
					&-n\eta\Theta\sigma_{n}-\eta \Theta\sigma_{n}-\eta\frac{\varphi'}{\varphi}\Theta h\sigma_{n}\\
					=&\left(\frac{1}{h}+\frac{\varphi'}{\varphi}\right)\Theta^{2}\sigma_{n}^{2}+\sigma_{n}^{ij}\bar g_{ij}\Theta^{2}\sigma_{n}\\
					&-\left(1+n+\frac{\varphi'}{\varphi}h\right)\eta \Theta\sigma_{n}.
\end{split}\end{align}
\end{proof}
Define
\begin{align*}
\mathcal{E}[h]=\int_{\Sp} \frac{\phi(h)}{ f }d\theta,\quad\mathcal{E}(t)=\mathcal{E}[h_{M_t}].
\end{align*}
\begin{lemma}\label{monotonicity}
Along the flow (\ref{F-param}) we have
\begin{align*}
\frac{d}{dt}V\geq 0,\quad \frac{d}{dt}\mathcal{E}=0.
\end{align*}
Along the flow (\ref{F-param2}) we have
\begin{align*}
\frac{d}{dt}(V-\mathcal{E})\geq 0.
\end{align*}
Here $V$ denotes the enclosed volume of $M_t$. In both cases, the monotonicity is strict unless the solution is stationary.
\end{lemma}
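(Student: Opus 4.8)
The plan is to reduce all three assertions to the scalar evolution $\partial_{t}h=\Theta\sigma_{n}-\eta h$ underlying \eqref{nf} and \Cref{ev eqs}, together with Minkowski's first variation formula for volume and the Cauchy--Schwarz inequality. Recall that $V=\frac{1}{n+1}\int_{\Sp}h\sigma_{n}\,d\theta$ and that for any smooth family of strictly convex bodies $\frac{d}{dt}V=\int_{\Sp}(\partial_{t}h)\sigma_{n}\,d\theta$; the latter follows by differentiating under the integral sign and integrating by parts twice, using that the cofactor matrix $\sigma_{n}^{ij}$ is divergence free and $\sigma_{n}^{ij}r_{ij}=n\sigma_{n}$, so that $\int_{\Sp}h\,\partial_{t}\sigma_{n}\,d\theta=n\int_{\Sp}(\partial_{t}h)\sigma_{n}\,d\theta$. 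Substituting $\partial_{t}h=fh\varphi(h)\sigma_{n}-\eta h$ and using $\phi'=1/\varphi$ together with $\Theta\sigma_{n}/(f\varphi(h))=h\sigma_{n}$ gives
\[
\frac{d}{dt}V=\int_{\Sp}\Theta\sigma_{n}^{2}\,d\theta-(n+1)\eta V,\qquad
\frac{d}{dt}\mathcal{E}=(n+1)V-\eta\int_{\Sp}\frac{h}{f\varphi(h)}\,d\theta .
\]

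Next I would specialize $\eta$. For \eqref{F-param} one has $\eta=\zeta(t)$, and rewriting $\zeta$ in support--function terms via $\langle x,\nu\rangle=h$, $\mathcal{K}=1/\sigma_{n}$, $d\mu_{M_{t}}=\sigma_{n}\,d\theta$ shows $\zeta=(n+1)V\big/\int_{\Sp}\tfrac{h}{f\varphi(h)}\,d\theta$; plugging this in yields immediately $\frac{d}{dt}\mathcal{E}=0$ and $\frac{d}{dt}V=\int_{\Sp}\Theta\sigma_{n}^{2}\,d\theta-((n+1)V)^{2}\big/\int_{\Sp}\tfrac{h}{f\varphi(h)}\,d\theta$. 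Since $h^{2}/\Theta=h/(f\varphi(h))$, Cauchy--Schwarz gives
\[
(n+1)V=\int_{\Sp}\big(\Theta\sigma_{n}^{2}\big)^{1/2}\big(h^{2}/\Theta\big)^{1/2}\,d\theta\le\Big(\int_{\Sp}\Theta\sigma_{n}^{2}\,d\theta\Big)^{1/2}\Big(\int_{\Sp}\frac{h}{f\varphi(h)}\,d\theta\Big)^{1/2},
\]
which after squaring is precisely $\frac{d}{dt}V\ge0$. For \eqref{F-param2}, $\eta=1$, so $\frac{d}{dt}(V-\mathcal{E})=\int_{\Sp}\Theta\sigma_{n}^{2}\,d\theta+\int_{\Sp}\tfrac{h}{f\varphi(h)}\,d\theta-2(n+1)V\ge0$ by the AM--GM inequality followed by the same Cauchy--Schwarz estimate.

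Finally, the rigidity clause. Equality in the Cauchy--Schwarz step forces $\Theta\sigma_{n}\equiv\lambda h$ pointwise, i.e. $f\varphi(h)\sigma_{n}\equiv\lambda$, for some constant $\lambda>0$. In the \eqref{F-param} case the definition of $\zeta$ then gives $\lambda=\zeta$; in the \eqref{F-param2} case the extra equality in AM--GM, namely $\int_{\Sp}\Theta\sigma_{n}^{2}\,d\theta=\int_{\Sp}h^{2}/\Theta\,d\theta$, combined with $\Theta\sigma_{n}=\lambda h$ forces $\lambda=1$. Either way $\partial_{t}h=\Theta\sigma_{n}-\eta h\equiv0$, so $M_{t}$ is stationary, and a stationary solution obviously realizes equality. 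I do not anticipate a real obstacle here: each line is a short computation plus Cauchy--Schwarz. The only slightly delicate point is the algebraic identity turning the geometric definition of $\zeta(t)$ into the ratio $(n+1)V/\int_{\Sp}h/(f\varphi(h))\,d\theta$ — this is exactly what makes $\mathcal{E}$ a conserved quantity along \eqref{F-param} and what makes the Cauchy--Schwarz estimate yield the correct sign of $\frac{d}{dt}V$.
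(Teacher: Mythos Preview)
Your argument is correct. The paper reaches the same conclusions by a slightly more direct route: instead of Cauchy--Schwarz and AM--GM, it writes the relevant quantities as perfect squares. For flow \eqref{F-param} one uses $\frac{d}{dt}\mathcal{E}=0$ to subtract a vanishing term and then the flow identity $\sigma_n-\zeta/(f\varphi(h))=\partial_t h/(fh\varphi(h))$ to get
\[
\frac{d}{dt}V=\int_{\Sp}\sigma_n\,\partial_t h\,d\theta=\int_{\Sp}\Big(\sigma_n-\frac{\zeta}{f\varphi(h)}\Big)\partial_t h\,d\theta=\int_{\Sp}\frac{(\partial_t h)^2}{fh\varphi(h)}\,d\theta\ge0,
\]
and analogously $\frac{d}{dt}(V-\mathcal{E})=\int_{\Sp}(\partial_t h)^2/(fh\varphi(h))\,d\theta$ for flow \eqref{F-param2}. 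This makes the rigidity clause immediate (equality iff $\partial_t h\equiv0$) without the separate equality analysis you carry out. The two computations are equivalent: expanding $\int_{\Sp}(\Theta\sigma_n-\eta h)^2/\Theta\,d\theta$ reproduces exactly your expressions, and your Cauchy--Schwarz step is the discriminant argument for nonnegativity of that quadratic. One small remark: in cases (3-c) and (3-d) of Theorem~\ref{main theorem1} one has $\phi'=-1/\varphi$, so your formula for $\frac{d}{dt}\mathcal{E}$ picks up an overall sign; this is harmless since the claim there is $\frac{d}{dt}\mathcal{E}=0$.
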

\begin{proof}
For the flow (\ref{F-param}), recalling that
\[\zeta(t):=\frac{\int_{\mathbb{S}^n}h\sigma_nd\theta}{\int_{\mathbb{S}^n}\frac{h}{f\varphi(h)}d\theta},\]
we obtain
\begin{align}\label{orlicz width}
\frac{d}{d t}\mathcal{E}(t)=&\pm\int_{\Sp}\left(f  h\varphi(h)\sigma_n-\frac{\int_{\Sp} h \sigma_nd\theta}{\int_{\Sp}\frac{h}{ f  \varphi(h)}d\theta}h\right)\frac{1}{f \varphi(h)}d\theta=0.
\end{align}
Here $+$ is for the cases (3-a) and (3-b), and $-$ is for the cases (3-c) and (3-d).
Using \[V = \frac{1}{n+1} \int_{\Sp} h \sigma_n d\theta,\] the divergence theorem, the $n$-homogeneity of $\sigma_n$, and that
$\bar{\nabla}_i\sigma_n^{ij}=0$
(cf., \cite[Lem. 2-12]{Andrews1994}), we obtain
\begin{align}\label{lower bound on volume}
\frac{1}{n+1}\frac{d}{d t}\int_{\Sp}h\sigma_nd\theta=&\int_{\Sp}\sigma_n\partial_thd\theta\nonumber\\
=&\int_{\Sp}\left(\sigma_n-\zeta(t)\frac{1}{f\varphi(h)}\right)\partial_thd\theta\nonumber\\
=&\int_{\Sp}\frac{(\partial_th)^2}{fh\varphi(h)}d\theta\geq0.
\end{align}
Equality holds precisely when $h$ solves (\ref{orlicz prob}).

For the flow (\ref{F-param2}), note that
\begin{align}
\frac{d}{dt}(V-\mathcal{E})&=\int_{\Sp}\left(\sigma_n-\frac{1}{f\varphi(h)}\right)\partial_thd\theta \nonumber\\
&=\int_{\Sp}\frac{(\partial_th)^2}{fh\varphi(h)}d\theta\geq 0. \label{monotonicity-flow2}
\end{align}
The equality holds precisely when $h$ solves (\ref{orlicz prob}) with $\gamma=1.$
\end{proof}
We write $w_-$ and $w_+$ respectively for the minimum width and the maximum width of a closed, convex hypersurface (or a convex body) with support function $h$. They are defined as
\[w_+=\max_{u\in\Sp}(h(u)+h(-u)),\quad w_-=\min_{u\in\Sp}(h(u)+h(-u)).\]
\begin{lemma}\label{C0 estimate}Under either flow \eqref{F-param} and \eqref{F-param2} and the corresponding assumptions, there are constants $a,b$ such that $a\leq h(\cdot,t)\leq b.$ Moreover, $\eta(t)$ is uniformly bounded above, and below away from zero.
\end{lemma}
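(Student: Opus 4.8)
The plan is to prove the two–sided bound $a\le h\le b$ first and then read off the bounds on $\eta$. The only global inputs are the monotonicities of \Cref{monotonicity}: along \eqref{F-param} the enclosed volume $V(t)$ is non-decreasing and $\mathcal{E}(t)\equiv\mathcal{E}(0)$, while along \eqref{F-param2} the difference $(V-\mathcal{E})(t)$ is non-decreasing. I shall use freely that each $M_t$ bounds a convex body $K_t\ni o$ (an $o$-symmetric one in the setting of \Cref{main theorem1}, since $f$ is even and the flow \eqref{F-param} is equivariant under $x\mapsto-x$), that $h_{K}\le R$ on $\Sp$ is equivalent to $K\subseteq\{|x|\le R\}$, and that if $K$ is $o$-symmetric with $h_K(u_0)=R$ then $[-Ru_0,Ru_0]\subseteq K$, so that $h_K(u)\ge R\,|\langle u,u_0\rangle|$ on $\Sp$.

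\emph{The flow \eqref{F-param}.} For the upper bound, suppose $\max_\Sp h(\cdot,t)=R$ with $u_0$ realising it, so $h\ge R|\langle\cdot,u_0\rangle|$. In cases (3-a), (3-b) ($\phi$ increasing, $\phi(\infty)=\infty$) one gets $\mathcal{E}(t)\ge\int_\Sp\phi(R|\langle u,u_0\rangle|)f^{-1}\,du$; its positive part tends to $+\infty$ as $R\to\infty$ by monotone convergence, while in case (3-b) the negative part (living where $R|\langle\cdot,u_0\rangle|<1$, on which $R\ge1$ and monotonicity of $\phi$ give $\phi(R|\langle\cdot,u_0\rangle|)\ge\phi(|\langle\cdot,u_0\rangle|)$) is bounded below via the assumed inequality $\int_\Sp\phi(|\langle u,u_0\rangle|)f^{-1}\,du\ge-\hat C_1$; either way this contradicts $\mathcal{E}\equiv\mathcal{E}(0)<\infty$ for $R$ large. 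In case (3-d) ($\phi$ decreasing, $\phi(0^+)=\infty$) one gets, for $R\ge1$, $\phi(h)\le\phi(|\langle\cdot,u_0\rangle|)$ and hence $\mathcal{E}(t)\le\int_\Sp\phi(|\langle u,u_0\rangle|)f^{-1}\,du\le\hat C_2<\mathcal{E}(0)$, again a contradiction. Case (3-c) is delicate and additionally uses $V(t)\ge V(0)>0$: picking a John ellipsoid $E\subseteq K_t$ (so $K_t\subseteq\sqrt{n+1}\,E$), one has $\operatorname{vol}(E)\ge c\,V(0)$, largest semiaxis comparable to $R$, and $h_{K_t}\ge h_E$; since $\phi$ is decreasing with $\phi(\infty)=0$ and $\phi(s)\le C_0 s^{q}$ near $0$ with $q>-n-1$, a scale-by-scale estimate on the explicit integral $\int_\Sp\phi(h_E)$ forces $\mathcal{E}(t)\le(\min f)^{-1}\int_\Sp\phi(h_E)\,du$ to be small for $R$ large, contradicting $\mathcal{E}\equiv\mathcal{E}(0)>0$. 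This gives $h\le b$. For the lower bound, $K_t\subseteq\{|x|\le b\}$ forces $V(t)\le\operatorname{vol}(\{|x|\le b\})$, and if $u_*$ realises $m:=\min_\Sp h(\cdot,t)$ then $K_t\subseteq\{|x|\le b\}\cap\{|\langle x,u_*\rangle|\le m\}$ has volume at most $C\,b^{n}m$; combined with $V(t)\ge V(0)$ this yields $m\ge V(0)/(Cb^{n})=:a>0$.

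\emph{The flow \eqref{F-param2}.} Here I argue by the maximum principle on \eqref{nf}. At a spatial maximum of $h(\cdot,t)$ the Hessian of $h$ is non-positive, so every principal radius of curvature is $\le h$ and $\sigma_n\le h^{n}$; as the outer normal there is the maximising direction, $\partial_t h\le h\bigl(f\,h^{n}\varphi(h)-1\bigr)<0$ once $h$ is large, by hypothesis (2) of \Cref{main theorem2}; Hamilton's trick then bounds $\max_\Sp h\le b$. Symmetrically $\sigma_n\ge h^{n}$ at a spatial minimum gives $\partial_t h\ge h\bigl(f\,h^{n}\varphi(h)-1\bigr)>0$ once $h$ is small, hence $\min_\Sp h\ge a>0$. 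Finally, for \eqref{F-param2}, $\eta\equiv1$; for \eqref{F-param}, $\eta=\zeta=\bigl(\int_\Sp h\sigma_n\,d\theta\bigr)\big/\bigl(\int_\Sp h(f\varphi(h))^{-1}\,d\theta\bigr)$, whose numerator is $(n+1)V(t)\in[(n+1)V(0),(n+1)\operatorname{vol}(\{|x|\le b\})]$ and whose denominator lies between two positive constants because $a\le h\le b$, $\varphi$ is positive and continuous on the compact interval $[a,b]$, and $f$ is pinched between positive constants. Hence $\eta(t)$ is bounded above and below away from $0$.

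The step I expect to be the real obstacle is the upper bound for \eqref{F-param} in case (3-c). Unlike the other cases, $\mathcal{E}$ there is a priori finite for \emph{every} $o$-symmetric body with $V\ge V(0)$ (by the polar-volume/Blaschke--Santal\'o identity), so $\mathcal{E}\equiv\mathcal{E}(0)$ alone does not prevent $\max h\to\infty$; one must combine it with the volume lower bound and control the contribution of the degenerating directions of $K_t$ through the John-ellipsoid reduction, and the admissible range $q>-n-1$ — equivalently, local integrability of $\phi(|\cdot|)$ near the origin of $\Euc$ — is exactly what makes that contribution vanish.
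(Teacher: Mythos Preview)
Your proposal is correct and follows essentially the same approach as the paper: the conservation of $\mathcal{E}$ together with $h\ge R|\langle\cdot,u_0\rangle|$ for the upper bound in cases (3-a), (3-b), (3-d), the volume lower bound combined with a slab/width inequality for the lower bound, and the maximum principle for the flow \eqref{F-param2}. In case (3-d) your argument is actually a touch cleaner than the paper's (you get $R<1$ directly rather than letting $R\to\infty$), and in case (3-c) you sketch a John-ellipsoid reduction while the paper simply cites \cite[Lem.~4.3]{Bianchi2019b}; your identification of $q>-n-1$ as precisely the integrability threshold needed for the degenerating directions is the right observation, though the ``scale-by-scale estimate'' would need to be written out to be a complete proof.
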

\begin{proof}
Suppose $R:=\max h_{M_t}$ is attained at the north pole $e_{n+1}$.

Theorem \ref{main theorem1}, case (3-a):
Due to convexity,
\[h_{M_t}(x)\geq Rx_{n+1}\quad ~\mbox{on}~\{x_{n+1}>0\}\cap \Sp.\]
Since $\phi\geq 0$ is non-decreasing and $\mathcal{E}(t)$ constant in time, we obtain
\begin{align}
\phi(\tfrac{1}{2}R)\int_{\{x_{n+1}>\frac{1}{2}\}\cap \Sp}\frac{1}{f}d\theta\leq \mathcal{E}(t)= \mathcal{E}(0).
\end{align}
Due to $\lim_{s\to\infty}\phi(s)=\infty$, we see that $h(\cdot,t)$ is uniformly bounded above. Let $V(M_t)$ denote the volume of the enclosed region by $M_t.$ To prove the uniform lower bound of $h$, note that 
\[V(M_t)\leq w_{-}(M_t)w_+(M_t)^n.\]
Since $M_t$ is $o$-symmetric,
$w_+=2\max h$ and $w_-=2\min h.$
The lower bound of $h$ now follows since $V(M_t)$ is non-decreasing along the flow.
The lower and upper bounds on $h$ imply bounds on $\eta.$

Theorem \ref{main theorem1}, case (3-b): Suppose $R>2$. Due to convexity, \[h_{M_t}\geq R|x_{n+1}|.\] Moreover, $\phi$ is non-decreasing,
\[\left\{
    \begin{array}{ll}
      \phi(s)\geq 0 & s\geq 1 \\
      \phi(s)\leq 0 & 0<s\leq 1,
    \end{array}
  \right.
\]
and $\mathcal{E}(t)$ constant in time. Hence,
\begin{align*}
\mathcal{E}(0)\geq& \int_{\Sp}\frac{\phi(R|x_{n+1}|)}{f}d\theta\\
\geq& \phi(\tfrac{1}{2}R)\int_{\{|x_{n+1}|>\frac{1}{2}\}\cap \Sp}\frac{1}{f}d\theta+
\int_{\{|x_{n+1}|\leq\frac{1}{2}\}\cap \Sp}\frac{\phi(R|x_{n+1}|)}{f}d\theta\\
\geq& \phi(\tfrac{1}{2}R)\int_{\{|x_{n+1}|>\frac{1}{2}\}\cap \Sp}\frac{1}{f}d\theta+
\int_{\{|x_{n+1}|\leq\frac{1}{2}\}\cap \Sp}\frac{\phi(|x_{n+1}|)}{f}d\theta\\
\geq& \phi(\tfrac{1}{2}R)\int_{\{|x_{n+1}|>\frac{1}{2}\}\cap \Sp}\frac{1}{f}d\theta+
\int_{\Sp}\frac{\phi(|x_{n+1}|)}{f}d\theta\\
\geq& \phi(\tfrac{1}{2}R)\int_{\{|x_{n+1}|>\frac{1}{2}\}\cap \Sp}\frac{1}{f}d\theta-
\hat{C}_1.
\end{align*}
Since $\lim_{s\to\infty}\phi(s)=\infty$, $R$ remains uniformly bounded above. This in turn implies the lower and upper bounds on support functions and $\eta.$

Theorem \ref{main theorem1}, case (3-c): Since $M_t$ is $o$-symmetric body and the volume is non-decreasing, the lower and upper bound on the support functions and $\eta$ follow from the proof of \cite[Lem. 4.3]{Bianchi2019b}.

Theorem \ref{main theorem1}, case (3-d): Suppose $R>1$. Since $\phi\geq 0$ is non-increasing
and $\mathcal{E}(t)=\mathcal{E}(0)$,
\begin{align*}
\mathcal{E}(0)\leq& \int_{\Sp}\frac{\phi(R|x_{n+1}|)}{f}d\theta\\
\leq& \phi(\tfrac{1}{2}R)\int_{\{|x_{n+1}|>\frac{1}{2}\}\cap \Sp}\frac{1}{f}d\theta+
\int_{\{|x_{n+1}|\leq\frac{1}{2}\}\cap \Sp}\frac{\phi(R|x_{n+1}|)}{f}d\theta\\
\leq& \phi(\tfrac{1}{2}R)\int_{\{|x_{n+1}|>\frac{1}{2}\}\cap \Sp}\frac{1}{f}d\theta+
\int_{\{|x_{n+1}|\leq\frac{1}{2}\}\cap \Sp}\frac{\phi(|x_{n+1}|)}{f}d\theta\\
\leq& \phi(\tfrac{1}{2}R)\int_{\Sp}\frac{1}{f}d\theta+
\int_{\Sp}\frac{\phi(|x_{n+1}|)}{f}d\theta.
\end{align*}
Note that as $R\to\infty$,  due to $\lim_{s\to\infty}\phi(s)=0,$ we obtain
\[\mathcal{E}(0)\leq \hat{C}_2.\]
By our choice of $M_0$, this last inequality is violated.

Theorem \ref{main theorem2}: To get a uniform upper bound on $h$, note that at a maximum of $h$ we have
\[\partial_th_{\max}\leq h_{\max}(f_{\max}\varphi(h_{\max})h_{\max}^{n}-1).\] The right-hand side will be negative if $h_{\max}\to\infty.$
To get a lower bound on $h$ away from zero, we can argue similarly.
\end{proof}
In the following two lemmas, we will use two auxiliary functions from \cite{Kroner2019a,Ivaki2019} to find uniform lower and upper bounds on the Gauss curvature.
\begin{lemma}
We have $\sigma_n\geq c$ for some positive constant.
\end{lemma}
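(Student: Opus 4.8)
The plan is to get a uniform positive lower bound for $\Theta\sigma_n$ by applying the minimum principle to $\Psi:=\log(\Theta\sigma_n)-\tfrac{\alpha}{2}\rho^2$ for a suitably large constant $\alpha>0$, and then divide by $\Theta$. First I would record what \Cref{C0 estimate} provides: from $a\le h\le b$ and the fact that the image of $h$ stays in a fixed compact subinterval of $(0,\infty)$ on which $f,\varphi>0$ are smooth, we get uniform bounds $0<\Theta_-\le\Theta=fh\varphi(h)\le\Theta_+$, uniform bounds on $|\varphi'(h)/\varphi(h)|$, $|h\varphi'(h)/\varphi(h)|$ and $|\n\Theta|$, the bound $\eta\ge\eta_->0$ (for the flow \eqref{F-param} this is precisely the part of \Cref{C0 estimate} saying $\zeta$ is bounded below away from zero), and $a\le\rho\le b$ since $M_t\subseteq B_b$. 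In particular $\sigma_n\ge c$ is equivalent to $\Theta\sigma_n\ge c'$ for a uniform $c'>0$.

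Since $\mathcal{L}$ is linear, $\mathcal{L}\Psi=\mathcal{L}\log(\Theta\sigma_n)-\alpha\,\mathcal{L}\tfrac{\rho^2}{2}$; using the identity $\mathcal{L}\log u=\tfrac{\mathcal{L}u}{u}+\tfrac{\Theta}{u^2}\sigma_n^{ij}\n_iu\,\n_ju\ge\tfrac{\mathcal{L}u}{u}$ with $u=\Theta\sigma_n$ and the two evolution equations of \Cref{ev eqs}, one finds
\begin{align*}
\mathcal{L}\Psi\geq{}&\Big(\tfrac1h+\tfrac{\varphi'}{\varphi}\Big)\Theta\sigma_n+\Theta\sigma_n^{ij}\bar{g}_{ij}-\eta\Big(n+1+\tfrac{h\varphi'}{\varphi}\Big)\\
&-\alpha(n+1)h\Theta\sigma_n-\alpha\sigma_n\bar{g}^{ij}\n_i h\,\n_j\Theta+\alpha\Theta\sigma_n^{ij}r_i^kr_{jk}+\alpha\eta\rho^2 .
\end{align*}
The terms $\alpha\Theta\sigma_n^{ij}r_i^kr_{jk}\ge0$ and $\Theta\sigma_n^{ij}\bar{g}_{ij}=\Theta\sigma_{n-1}\ge0$ may be discarded; $-\eta(n+1+h\varphi'/\varphi)$ is bounded by a constant independent of $\alpha$; the terms $(\tfrac1h+\tfrac{\varphi'}{\varphi})\Theta\sigma_n$, $-\alpha(n+1)h\Theta\sigma_n$ and $-\alpha\sigma_n\bar{g}^{ij}\n_i h\,\n_j\Theta$ are $O(\Theta\sigma_n)$ with constants independent of $t$; and $\alpha\eta\rho^2\ge\alpha\eta_-a^2$ is a positive constant that can be made arbitrarily large by enlarging $\alpha$. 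Thus, fixing $\alpha$ so that $\alpha\eta_-a^2$ exceeds the $\alpha$-independent bound by a definite amount, we get $\mathcal{L}\Psi\ge1$ at every point where $\Theta\sigma_n\le\delta_0$ for some $\delta_0>0$ depending only on the fixed data. At a spatial minimum of $\Psi(\cdot,t)$ one has $\mathcal{L}\Psi\le\partial_t\Psi$, so $\tfrac{d}{dt}\Psi_{\min}\ge\mathcal{L}\Psi$ there; combined with $\log(\Theta\sigma_n)\le\Psi_{\min}+\tfrac{\alpha}{2}b^2$ at that point, this yields $\tfrac{d}{dt}\Psi_{\min}\ge1$ whenever $\Psi_{\min}\le\log\delta_0-\tfrac{\alpha}{2}b^2$. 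Hence $\Psi_{\min}(t)\ge\min\{\Psi_{\min}(0),\,\log\delta_0-\tfrac{\alpha}{2}b^2\}$ for all $t$, and using $\rho\le b$ once more, $\log(\Theta\sigma_n)\ge\Psi_{\min}(t)+\tfrac{\alpha}{2}a^2$ is uniformly bounded below, giving $\Theta\sigma_n\ge c'$ and $\sigma_n\ge c'/\Theta_+=:c>0$.

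The one idea that makes this work — and hence the step I see as the crux — is noticing that the ordinarily innocuous zeroth-order term $-\eta\rho^2$ in the evolution of $\rho^2/2$ turns, after subtracting a large multiple of $\rho^2$ from $\log(\Theta\sigma_n)$, into the dominant good term; everything else is substitution of the evolution equations of \Cref{ev eqs} and careful tracking of which constants depend on $\alpha$, following the auxiliary-function technique of \cite{Kroner2019a,Ivaki2019}. Note that the sign of $\eta$ enters only through $\alpha\eta\rho^2\ge\alpha\eta_-a^2>0$, so the same argument covers both flows \eqref{F-param} and \eqref{F-param2} (where $\eta\equiv1$).
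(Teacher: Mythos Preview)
Your proof is correct and follows essentially the same route as the paper: the auxiliary function $\log(\Theta\sigma_n)-\tfrac{A}{2}\rho^2$, the evolution equations of \Cref{ev eqs}, dropping the nonnegative terms, and choosing $A$ large so that $A\eta\rho^2$ dominates the $\alpha$-independent constant while the remaining terms are $O(\Theta\sigma_n)$. One tiny slip in the final sentence: the lower bound $\log(\Theta\sigma_n)\ge\Psi_{\min}+\tfrac{\alpha}{2}a^2$ uses $\rho\ge a$, not $\rho\le b$.
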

\begin{proof}
The evolution equation of $\chi:=\log(\Theta \sigma_n)-A\frac{\rho^2}{2}$ is given by
\begin{align*}
\mathcal{L}\chi=&\Theta \sigma_n^{ij}\n_i\log(\Theta\sigma_n)\n_j\log(\Theta\sigma_n)\\
&+\left(\frac{1}{h}+\frac{\varphi'(h)}{\varphi(h)}\right)\Theta \sigma_n+\Theta\sigma_n^{ij}\bar{g}_{ij}\\
&-\eta\left(n+1+\frac{h\varphi'(h)}{\varphi(h)}\right)\\
&-(n+1)A h\Theta \sigma_n-A\sigma_n\bar{g}^{ij}\n_ih\n_j\Theta\\
&+A\Theta\sigma_n^{ij}r_i^kr_{jk}+A\eta \rho^2.
\end{align*}
Dropping some positive terms gives
\begin{align*}
\mathcal{L}\chi\geq&\left(\frac{1}{h}+\frac{\varphi'(h)}{\varphi(h)}\right)\Theta \sigma_n-\eta\left(n+1+\frac{h\varphi'(h)}{\varphi(h)}\right)\\
&+\frac{A}{2}\eta \rho^2-(n+1)A h\Theta \sigma_n-A\sigma_n\bar{g}^{ij}\n_ih\n_j\Theta.
\end{align*}
By the $C_0$-estimate, if $A$ is chosen large enough, the right-hand side of the above equation is strictly positive provided $\min \sigma_n\to 0$. Thus $\sigma_n$ is uniformly bounded below away from zero.
\end{proof}
\begin{lemma}
There is a constant $d$ such that $\sigma_n\leq d.$
\end{lemma}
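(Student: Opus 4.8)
The plan is to reduce the bound to a one-sided second-order estimate for the support function and to establish that estimate with the maximum principle. Since $\sigma_n=\det(r_{ij})\le\big(\tfrac1n\,\sigma_1\big)^n$, where $\sigma_1:=\bar g^{ij}r_{ij}$ is the trace of $r$, and since $a\le h\le b$ by \Cref{C0 estimate}, it suffices to bound the largest principal radius $\lambda_{\max}$ (the largest eigenvalue of $r_{ij}$ with respect to $\bar g$) uniformly from above, the lower bound $r_{ij}\ge 0$ being automatic from strict convexity. I would apply the maximum principle to an auxiliary function $W=\log\lambda_{\max}+\beta(h)$, possibly augmented by a summand $\tfrac N2\rho^2$, with $\beta$ and $N$ to be chosen; this is the second of the two auxiliary functions indicated just before \Cref{ev eqs}, taken from \cite{Ivaki2019,Kroner2019a}. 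The estimates already in hand are $a\le h\le b$, the lower bound $\sigma_n\ge c$ just established, the two-sided bounds on $\eta$ from \Cref{C0 estimate}, the bound $\rho\le b$ (hence $|\n h|\le b$), and the smoothness and positivity of $f$ and $\varphi$ on the compact range $[a,b]$.

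First I would record the evolution equation of $r_{ij}=\n_i\n_j h+\bar g_{ij}h$. Starting from $\partial_t h=\Theta\sigma_n-\eta h$ and commuting covariant derivatives on $\Sp$ — using that the round sphere has constant curvature, so $\n_i\n_j r_{kl}=\n_k\n_l r_{ij}$ up to explicit terms contracting $\bar g$ with $r$ and with $\bar g\,h$, together with the Codazzi-type identity $\n_i r_{jk}-\n_j r_{ik}=\bar g_{jk}\n_i h-\bar g_{ik}\n_j h$ — one obtains
\[\mathcal{L}r_{ij}=\Theta\,\sigma_n^{kl,pq}\n_i r_{kl}\n_j r_{pq}+\mathcal{B}_{ij},\]
where $\mathcal{B}_{ij}$ is at most quadratic in $r$, with coefficients that are controlled expressions in $h,\n h,f,\n f,\n^2 f,\varphi(h),\varphi'(h),\varphi''(h)$ and $\eta$; in particular $\mathcal{B}_{ij}$ carries the ``bad'' term $\Theta\,(\sigma_n^{kl}\bar g_{kl})\,r_{ij}$ produced by the curvature commutators.

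Then I would evaluate $\mathcal{L}W$ at a first spacetime maximum $(x_0,t_0)$ of $W$ over $M^n\times[0,\tau]$, $\tau<T$. If $t_0=0$ the bound comes from $M_0$, so assume $t_0>0$; choosing $e_1$ to be a unit $\lambda_{\max}$-eigenvector of $r$ at $(x_0,t_0)$, extended to a local unit vector field, and setting $r_{11}:=r(e_1,e_1)$ — so that $r_{11}\le\lambda_{\max}$ with equality at $(x_0,t_0)$, whence $\log r_{11}+\beta(h)$ also attains its maximum there — the maximum principle gives $\mathcal{L}\big(\log r_{11}+\beta(h)\big)\ge 0$ and $\n\big(\log r_{11}+\beta(h)\big)=0$ at $(x_0,t_0)$. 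In the expansion, the bad term $\Theta\,(\sigma_n^{kl}\bar g_{kl})$ arising in $\mathcal{B}_{11}/r_{11}$ is absorbed by the negative term $\beta'(h)\,\Theta h\,(\sigma_n^{kl}\bar g_{kl})$ produced by $\beta'(h)\,\mathcal{L}h$ (with $\mathcal{L}h$ from \Cref{ev eqs}) once $\beta$ is chosen with $\beta'<0$ and $|\beta'|\,a$ large; the contributions of lower homogeneity in $r$, together with the extra terms created by $\beta$ (and by $\tfrac N2\rho^2$, which supplies a strongly negative $-N\Theta\,\sigma_n^{ij}r_i^k r_{jk}$), are controlled by the $C^0$, $\sigma_n\ge c$ and $\eta$ bounds. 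The dangerous gradient terms $\Theta\,\sigma_n^{kl,pq}\n_1 r_{kl}\n_1 r_{pq}/r_{11}$ and $\Theta\,\sigma_n^{ij}\n_i\log r_{11}\,\n_j\log r_{11}$ are handled via the concavity of $\log\sigma_n$ in $r$ — used in the sharp form that retains the negative off-diagonal contributions — together with the Codazzi identity, which expresses the off-diagonal $\n_1 r_{k1}$ in terms of $\n_k r_{11}$ and $\n_k h$, hence (by $\n W=0$) in terms of $\n_k h$ alone. The upshot of $\mathcal{L}W\ge 0$ is then $\lambda_{\max}(x_0,t_0)\le C$ with $C$ independent of $\tau$ and $T$; hence $W\le C$, so $\lambda_{\max}\le C$ on $M^n\times[0,T)$ and $\sigma_n\le\lambda_{\max}^n\le C^n=:d$.

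The main obstacle is this last step: one must verify that all the genuinely bad contributions — the curvature term $\Theta\,(\sigma_n^{kl}\bar g_{kl})$, the full gradient term from differentiating $\sigma_n$, and the additional terms created by the non-homogeneity of $\varphi$ (those involving $\varphi'$ and $\varphi''$, which are absent from the homogeneous flows of \cite{Bryan2018d}) — are simultaneously absorbed by the single well-chosen auxiliary function and by the concavity of $\sigma_n^{1/n}$; the global term $\eta$ enters only through its already-established uniform two-sided bounds and causes no further difficulty. Once $c\le\sigma_n\le d$ and $a\le h\le b$ are available, the flow is uniformly parabolic with controlled ingredients, and Krylov--Safonov together with Schauder bootstrapping furnish the uniform higher-order estimates needed in what follows.
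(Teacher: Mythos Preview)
Your route is genuinely different from the paper's, and considerably more laborious. The paper does \emph{not} pass through a bound on $\lambda_{\max}$ here; it bounds $\sigma_n$ directly by applying the maximum principle to
\[
\chi:=\frac{f\varphi(h)\,\sigma_n}{1-\ep\,\rho^2/2}=\frac{\Theta\sigma_n/h}{1-\ep\,\rho^2/2}
\]
for small $\ep>0$. The point is that the evolution equations of the two ingredients are \emph{already} available: \Cref{ev eqs} gives $\mathcal L(\rho^2/2)$, and the identity
\[
\mathcal L\!\left(\tfrac{\Theta\sigma_n}{h}\right)=2\Theta\sigma_n^{ij}\n_i\log h\,\n_j\!\left(\tfrac{\Theta\sigma_n}{h}\right)+\left(n+\tfrac{h\varphi'}{\varphi}\right)\tfrac{\Theta\sigma_n}{h}\left(\tfrac{\Theta\sigma_n}{h}-\eta\right)
\]
is a two-line consequence of \Cref{ev eqs}. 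At a maximum of $\chi$ one uses $\n\chi=0$ together with $\n_j(\rho^2/2)=r_j{}^{l}\n_l h$ to see that the cross term $\sigma_n^{ij}\n_i\log h\,\n_j(\Theta\sigma_n/h)$ has a sign; the $\rho^2$ factor then supplies $-\ep\Theta\sigma_n^{ij}r_i{}^kr_{jk}\le -c\,\sigma_n^{1+1/n}$ via AM--GM, and one arrives at an ODE-type inequality $\partial_t\chi\le c_1\chi+c_2\chi^2-c_3\chi^{2+1/n}$, which immediately gives the bound. No computation of $\mathcal L r_{ij}$, no commutation of covariant derivatives, no concavity/Codazzi bookkeeping.

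Your plan --- bounding $\lambda_{\max}$ via $\log\lambda_{\max}+\beta(h)+\tfrac{N}{2}\rho^2$ --- is essentially the strategy of the \emph{next} lemma (principal curvature bounds), which the paper carries out only \emph{after} the two-sided $\sigma_n$ estimate is in hand. As a standalone argument for the present lemma it is at best a long detour, and your sketch leaves a real point open: the commutator term $\Theta\sigma_n^{kl}\bar g_{kl}=\Theta\sigma_{n-1}$ must be absorbed by $\beta'(h)\Theta h\,\sigma_{n-1}$, but $\beta$ then creates a gradient contribution $(\beta')^2\Theta\sigma_n^{ij}\n_ih\n_jh$ of the same $\sigma_{n-1}$ strength, and the simultaneous requirements $|\beta'|a>1$ and $|\beta'|a>1+(\beta')^2|\n h|^2$ need not be compatible for arbitrary $a\le h\le b$. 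One can rescue this with a sharper use of the off-diagonal concavity terms and Codazzi (and, incidentally, on $\Sp$ the tensor $\n r$ is \emph{fully} symmetric, so your stated Codazzi identity with nonzero right-hand side is incorrect, though the correct version only helps), but that is exactly the delicate computation you have not carried out. The paper's auxiliary function avoids all of this.
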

\begin{proof}
For $\ep>0$ sufficiently small, consider the auxiliary function
\[\chi:=\frac{f\varphi\sigma_n}{1-\ep \frac{\rho^2}{2}}.\]
We have
\begin{align}\label{soliton eq}
\mathcal{L}\frac{\Theta \sigma_n}{h}=&2\Theta\sigma_n^{ij}\n_i\log h\n_j\left(\frac{\Theta \sigma_n}{h}\right)
\nonumber
\\&+\left(n+\frac{h\varphi'(h)}{\varphi(h)}\right)\left(\frac{\Theta \sigma_n}{h}\right)\left(\frac{\Theta \sigma_n}{h}-\eta\right).
\end{align}
Therefore, by Lemma \ref{ev eqs} and (\ref{soliton eq}), at any point with $t>0$ where $\chi$ attains a maximum,
\begin{align*}
\partial_t\chi\leq&\frac{1}{1-\ep \frac{\rho^2}{2}}\Biggl(2\Theta\sigma_n^{ij}\n_i\log h\n_j\left(\frac{\Theta \sigma_n}{h}\right)
\\&+\left(n+\frac{h\varphi'(h)}{\varphi(h)}\right)\left(\frac{\Theta \sigma_n}{h}\right)\left(\frac{\Theta \sigma_n}{h}-\eta\right)\Biggr)\\
&+\frac{\ep f\varphi(h)\sigma_n}{(1-\ep \frac{\rho^2}{2})^2}\left((n+1)h\Theta \sigma_n+\sigma_n\bar{g}^{ij}\n_ih\n_j\Theta-\Theta\sigma_n^{ij}r_i^kr_{jk}-\eta \rho^2\right).
\end{align*}
Using $\n \chi=0$ we have \[\n_j\log\left(\tfrac{\Theta \sigma_n}{h}\right) = \n_j \log\left(\tfrac{1}{1-\ep^2\rho^2/2}\right).\] Substituting in
\[
\n_j \frac{\rho^2}{2} = h\n_j h + \bar{g}^{kl}\n_j \n_k h \n_l h = \bar{g}^{kl} r_{jl} \n_k h.
\]
and from the convexity of the hypersurface, we then see that the term $\sigma_n^{ij}\n_i\log h\n_j\left(\frac{\Theta \sigma_n}{h}\right)$ is non-positive. Thus due to AM-GM inequality,
\[\sigma_n^{ij}r^k_i r_{jk}\geq c\sigma_n^{1+\tfrac{1}{n}},\]
and the $C_0$-estimate, we obtain
\[\partial_{t} \chi\leq c_1\chi+c_2\chi^2-c_3\chi^{2+\frac{1}{n}}\]
for some positive constants $c_1,c_2,c_3$. Here we identified $\chi$ with $\max \chi$.
 From this the claim follows.
\end{proof}
\begin{lemma}
The principal curvatures are uniformly bounded above, and below away from zero.
\end{lemma}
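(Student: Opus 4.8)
The plan is to reduce the statement to a one‑sided second–order estimate for $h$ and then establish that estimate by a maximum principle. Since the principal curvatures are the reciprocals of the principal radii of curvature $\lambda_1,\dots,\lambda_n$ — the eigenvalues of $(r_{ij})$ with respect to $\g$ — and since the two preceding lemmas already give $0<c\le\sigma_n=\lambda_1\cdots\lambda_n\le d$, it is enough to produce a uniform bound $r_{ij}\le\Lambda\g_{ij}$. Indeed this forces $\lambda_i=\sigma_n\big(\prod_{k\ne i}\lambda_k\big)^{-1}\ge c\,\Lambda^{-(n-1)}$, so all principal radii, hence all principal curvatures, remain in a fixed compact subinterval of $(0,\infty)$.

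To obtain the upper bound on $\lambda_{\max}(r_{ij})$ I would apply the parabolic maximum principle to
\[
W=\log\lambda_{\max}(r_{ij})+N\Phi,
\]
where $\Phi$ is a barrier assembled from the already controlled quantities $h$ and $\rho=\sqrt{h^2+|\n h|^2}$ (for instance $\Phi=\tfrac12\rho^2$, or the Pogorelov–type barrier $\tfrac12|\n h|^2-h$, in the spirit of the auxiliary functions of \cite{Kroner2019a,Ivaki2019} used above) and $N$ is a large constant. As $\lambda_{\max}$ is only Lipschitz, one works at a first maximum $(u_0,t_0)$ with $t_0>0$, rotates so that $(r_{ij})(u_0,t_0)$ is diagonal with $r_{11}=\lambda_{\max}$, and replaces $\log\lambda_{\max}$ by the smooth function $\log r_{11}$, which dominates $W$ and is still maximal there; thus $\n\log r_{11}=-N\n\Phi$ and $\E W\ge0$ at that point. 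Differentiating $\pa_t h=\Theta\sigma_n-\eta h$ twice gives $\pa_t r_{ij}=\n_i\n_j(\Theta\sigma_n)+\g_{ij}\Theta\sigma_n-\eta r_{ij}$; expanding $\n_i\n_j(\Theta\sigma_n)$ with $\Theta=fh\varphi(h)$ and commuting covariant derivatives on $\Sp$ by the Simons identity, one computes $\E\log r_{11}$. One finds that the unit curvature of $\Sp$ is favorable: it produces the term $\tfrac{n\Theta\sigma_n}{r_{11}}-\Theta\sigma_n^{ij}\g_{ij}$, and $\Theta\sigma_n^{ij}\g_{ij}\ge n\Theta_{\min}\sigma_n^{(n-1)/n}\ge n\Theta_{\min}c^{(n-1)/n}>0$ (AM--GM, $\sigma_n\ge c$, and $\Theta=fh\varphi(h)\ge\Theta_{\min}>0$ since $h\ge a>0$ and $f,\varphi>0$). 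The genuinely delicate terms are the third–order gradient terms — $\Theta\sigma_n^{ij}r_{11}^{-2}\n_i r_{11}\n_j r_{11}$ coming from passing to the logarithm, and $\Theta\sigma_n^{ij}\n_1 r_{ij}$ coming from differentiating $\Theta$; I would absorb these in the standard way, using the concavity of $\log\sigma_n$ (the Monge--Amp\`ere structure of $\det(r_{ij})$) to trade them for second derivatives of the flow equation, together with the critical–point identity and the good term $-N\Theta\sigma_n^{ij}r_i^k r_{jk}\le-Nc\,\Theta_{\min}\,r_{11}$ produced by $N\E\Phi$ (\Cref{ev eqs}). All remaining terms of $\E W$ are bounded multiples of $\Theta\sigma_n$, $\eta$, $\Theta\sigma_n^{ij}\g_{ij}$ or lower order, the multiples being built from $f,\varphi$ and their first two derivatives, hence uniformly bounded because \Cref{C0 estimate} confines $h$ to $[a,b]$ (so $\varphi,1/\varphi,\varphi',\varphi''$ lie in a compact range) and keeps $\eta$ bounded above and away from $0$. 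Then $\E W\ge0$ at $(u_0,t_0)$ forces $r_{11}(u_0,t_0)\le C$, whence $\lambda_{\max}\le C$ on $\Sp\times[0,T)$, and with the reduction above the lemma follows.

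The step I expect to be the main obstacle is exactly the absorption of the third–order terms in the non‑homogeneous setting. For $\varphi(s)=s^{1-p}$ — and with a homogeneous or absent global term — the speed factor $\Theta$ has a fixed scaling and $\varphi'/\varphi,\varphi''/\varphi$ are constants, which is what makes the classical cancellations clean; for a general smooth $\varphi$, and with the global term $\eta h$ present, those cancellations must be rerun with these ingredients controlled only through \Cref{C0 estimate}, checking that no extra term carries a higher power of $r_{11}$ than the good terms $-\Theta\sigma_n^{ij}\g_{ij}$ and $-Nc\,\Theta_{\min}\,r_{11}$ arising from the curvature of $\Sp$ and from $N\E\Phi$. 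Once this bookkeeping is carried out the estimate closes as for the homogeneous flow of \cite{Bryan2018d}.
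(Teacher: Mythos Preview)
Your plan is sound and would close, but it is not quite the paper's route. The paper bounds the principal \emph{curvatures} from above via the auxiliary function $\chi=\log(\|W\|/h^{\alpha})$, where $\|W\|^{2}$ is the sum of the squares of the principal curvatures and $\alpha>0$ is a suitable constant; the paper simply refers to \cite[Lem.~8]{Bryan2018d} (and alternatively to \cite[Lem.~8.2]{Haodi2019}) for the maximum-principle computation. You instead bound the principal \emph{radii} from above by applying the maximum principle to $\log r_{11}+N\tfrac{\rho^{2}}{2}$, using the good term $-N\Theta\sigma_{n}^{ij}r_{i}^{k}r_{jk}$ from $\mathcal{L}\tfrac{\rho^{2}}{2}$ in \Cref{ev eqs} to dominate. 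Either estimate, once in hand, combines with $0<c\le\sigma_{n}\le d$ to yield the two-sided curvature bound exactly as you say.

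Both arguments rest on the same ingredients---the $C^{0}$ bounds from \Cref{C0 estimate} (which pin $\varphi,\varphi',\varphi''$ to a compact range and control $\eta$), the two-sided $\sigma_{n}$ bound, and the concavity structure of $\sigma_{n}$ for the third-order terms---and the non-homogeneity of $\varphi$ enters in either case only through bounded coefficients, so your concern in the last paragraph is legitimate but not an actual obstacle. Your barrier $\tfrac{\rho^{2}}{2}$ gives a clean negative $r_{11}$-term and is arguably more transparent than dividing by $h^{\alpha}$; the paper's choice has the advantage of matching verbatim the cited reference, sparing any new bookkeeping.
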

\begin{proof}
To obtain the lower and upper curvature bounds, we can argue exactly as in the proof of \cite[Lem.~8]{Bryan2018d} where we applied the maximum principle to the evolution equation of $\chi:=\log( \|W\|/ h^{\alpha})$. Here $\|W\|^2$ is the sum of squares of the principal curvatures and $\alpha>0$ is a suitable constant. A different argument is also given in the proof of \cite[Lem.~8.2]{Haodi2019}.
\end{proof}
\subsection{Convergence}\label{sec:conv}
In the previous section, the $C^2$-estimates were obtained for either flow under their corresponding assumptions. Now the higher order regularity estimates follow from the theory of parabolic differential equations; see for details \cite{McCoy2005,Makowski2013}. Therefore, the maximal time interval is unbounded.

Due to Lemma \ref{monotonicity}, subconvergence for the flows (\ref{F-param}) and (\ref{F-param2}) to stationary solutions is standard: by the upper bound on the support functions, there is a constant $C$, depending only on $M_0,$ such that
\[\int_{\mathbb{S}^n} h\sigma_nd\theta\leq C.\]
In view of (\ref{lower bound on volume}) and (\ref{monotonicity-flow2}), this implies that
\[\liminf_{t\to\infty} \int_{\Sp}(\partial_t h)^2=0.\]
Hence, the flow hypersurfaces subconverge to a stationary solution.

To obtain full convergence for the flow (\ref{F-param2}), let us put
\[\mathcal{F}=V-\int\frac{\phi(h)}{f}d\theta.\]
Recall from (\ref{monotonicity-flow2}) that
\begin{align*}
\frac{d}{dt}\mathcal{F}=\int_{\Sp}\frac{(\partial_th)^2}{fh\varphi(h)}d\theta.
\end{align*}
Note that
\[\nabla^{L^2(\Sp)} \mathcal{F}=\sigma_n-\frac{1}{f\varphi(h)}=\frac{\partial_th}{fh\varphi(h)}.\]
Hence due to our $C_0$-estimates,
\begin{align*}
\frac{d}{dt}\mathcal{F}\geq c_0\|\nabla^{L^2(\Sp)}\mathcal{F}\|_{L^2(\Sp)}\|\partial_th\|_{L^2(\Sp)}.
\end{align*}
Now we can argue as in \cite{Andrews1997a,Guan2003c} to promote subconvergence to full convergence.
\section{General measures}\label{general measure}
\subsection{Notions and notation}
Let $\phi: [0,\infty)\to[0,\infty)$ be an increasing function, continuously differentiable on $(0,\infty)$ with positive derivative, and satisfying
$\lim_{s\to\infty}\phi(s)=\infty.$ For a finite non-zero Borel measure $\mu$ and a continuous, nonnegative function $f$ on $\Sp$, the Orlicz norm $\|f\|_{\phi,\mu}$ is defined by
\begin{align*}
\|f\|_{\phi,\mu}=\inf\left\{\lambda>0: \frac{1}{|\mu|}\int_{\Sp}\phi\left(\frac{f}{\lambda}\right)d\mu\leq \phi(1)\right\}.
\end{align*}
Here $|\mu|=\mu(\Sp).$

Note that in general the Orlicz norm does not satisfy a triangle inequality and the case $\phi(t)=t^p$ gives the normalized $L^p$ norm.
The Orlicz norm satisfies the following properties:
\begin{align}\label{basic prop orlicz}
\|cf\|_{\phi,\mu}&=c\|f\|_{\phi,\mu}\quad \forall c\in (0,\infty),\\
f\leq g&\Rightarrow \|f\|_{\phi,\mu}\leq \|g\|_{\phi,\mu}.\nonumber
\end{align}

\begin{definition}~
\begin{enumerate}
  \item The Hausdorff distance of two convex sets $K,L$ is defined by
\[d_{\mathcal{H}}(K,L)=\max_{u\in\Sp}|h_K(u)-h_L(u)|.\]
  \item A sequence $\{K_i\}_{i\in \mathbb{N}}\subset \mathrm{K}$ converges to a convex body $K$ if \[\lim_{i\to\infty}d_{\mathcal{H}}(K_i,K)=0.\]
  \item For $v\in\Sp$, define \[h_{\hat{v}}(u)=\langle u,v\rangle_+=\max\{0,\langle u,v\rangle\}\quad \forall u\in \Sp.\]
  \item Given a function $f:\Sp\to (0,\infty)$, we define the measure \[d\mu_f=\frac{1}{f}d\theta.\]
\end{enumerate}
\end{definition}

\subsection{Parabolic Approximation}\label{subsec approx}
We begin this section by sketching the proof of Theorem \ref{main theoremGeneral}. Assume $\varphi$ is smooth.

Step 1: We perturb $\varphi$ to $\varphi_{\varepsilon}$ such that \[\varphi_{\varepsilon}(s)=s^{-n-\varepsilon}, \quad \forall s\in (0,\varepsilon].\]

Step 2: Suppose $0<f\in C^{\infty}(\mathbb{S}^n).$ Using a suitable curvature flow, we find a smooth, strictly convex hypersurface $M_{\varepsilon}$ with positive support function such that
\[f\varphi_{\varepsilon}(h)\sigma_n=\frac{\int_{\Sp} h\sigma_nd\theta}{\int_{\Sp}\frac{h}{\varphi_{\ep}(h)}d\mu_f}.\]

Step 3: Take $\varepsilon=1/i$ and set $\varphi_i=\varphi_{\varepsilon}.$ Applying Step 2 to $f$ and $\varphi_i,$ we find $K_i$ (with the origin in its interior) such that
\begin{align*}
 dS_{K_i}= \frac{\gamma_i}{\varphi_i(h_{K_i})}d\mu_{f}\quad \mbox{for a constant}~\gamma_i>0.
\end{align*}

Moreover, we show that the minimum and maximum width of $K_i$ as well as $\gamma_i$ are uniformly bounded above and below away from zero, such that these bounds, for $i$ sufficiently large, depend only on $\varphi$ and $\mu_f$. Thus, letting $i\to\infty,$ we can find a convex body $K\in \mathrm{K}_o$ such that
\begin{align*}
 dS_K= \frac{\gamma}{\varphi(h_K)}d\mu_f\quad \mbox{for a constant}~\gamma>0.
\end{align*}
Step 4: Choose $0<f_i\in C^{\infty}(\mathbb{S}^n)$ such that $\mu_{f_i}\to \mu$ weakly as $i\to \infty$. By the conclusion of Step 3, we find $K_i\in \mathrm{K}_o$ such that
\begin{align*}
 dS_{K_i}= \frac{\gamma_i}{\varphi(h_{K_i})}d\mu_{f_i}\quad \mbox{for a constant}~\gamma_i>0.
\end{align*}

Moreover, $w_{\pm}(K_i)$ and $\gamma_i$ are uniformly bounded above and below away from zero, and these bounds, for $i$ sufficiently large, depend only on $\varphi$ and $\mu$. Thus, letting $i\to\infty,$ we find $K\in \mathrm{K}_o$ solving
\begin{align*}
 dS_K= \frac{\gamma}{\varphi(h_K)}d\mu\quad \mbox{for a constant}~\gamma>0.
\end{align*}
A further approximation allows us to assume $\varphi$ is continuous. Now we proceed with the details of this outline.

\begin{Assum}\label{assum}Suppose
\begin{enumerate}
  \item $\varphi:(0,\infty)\to (0,\infty)$ is continuous and $\lim\limits_{s\to0^+}\varphi(s)=\infty,$
  \item $\phi(s)=\int_0^s\frac{1}{\varphi(t)}dt$ exists for all $s>0$ and $\lim\limits_{s\to\infty}\phi(s)=\infty.$
\end{enumerate}
\end{Assum}
Suppose $\omega:\mathbb{R}\to [0,1]$ is a smooth function such that
\begin{align*}
\omega(s)=\left\{
  \begin{array}{ll}
    1 & s\geq 2\varepsilon, \\
    0 & s\leq \varepsilon.
  \end{array}
\right.
\end{align*}
Let $\varphi_{\ep},\phi_{\ep}:(0,\infty)\to (0,\infty)$ be defined by
\begin{equation}\label{approx varphi}
\varphi_{\ep}(s)=(1-\omega(s))s^{-n-\ep}+\omega(s)\varphi(s)
\end{equation}
and
\begin{align}\label{xxx}
\phi_{\ep}(s)=\int_0^s\frac{1}{\varphi_{\ep}(t)}dt+\phi(2\ep).
\end{align}
Due to the convexity of $x\mapsto x^{-1}$,
\begin{align}\label{in between 1}
\frac{1}{\varphi_{\ep}(s)}\leq \frac{1}{\varphi(s)}+s^{n+\ep}\quad \forall s\in [0,2\ep].
\end{align}
Moreover, we have
\begin{align}\label{in between 2}
\phi_{\ep}(s)\geq \phi(s)
\end{align}
since by (\ref{xxx}) we have
\begin{align*}
\phi_{\ep}(s)\geq& \phi(2\varepsilon)\geq \phi(s)\quad\quad ~\mbox{for}~ s\leq 2\varepsilon,\\
\phi_{\ep}(s)=&\phi(s)+\int_0^{2\ep}\frac{1}{\varphi_{\ep}(t)}dt\geq \phi(s)\quad\quad ~\mbox{for}~ s\geq 2\varepsilon.
\end{align*}
This last inequality also shows that $\lim_{s\to\infty}\phi_{\ep}(s)=\infty.$
\begin{remark}
Note that
\[\phi(0)=0\quad\mbox{and}\quad \phi_{\ep}(0)=\phi(2\ep).\]
\end{remark}
\begin{lemma}
The functions $\phi_{\ep}$, $1/\varphi_{\ep}$, and $s/\varphi_{\ep}(s)$ converge uniformly in $[0,\infty)$ to $\phi$, $1/\varphi(s)$, and $s/\varphi(s)$ as $\ep\to 0,$ respectively.
\end{lemma}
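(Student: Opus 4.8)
The statement to establish is that the approximating functions converge uniformly on $[0,\infty)$ to their limits as $\ep\to 0$. The plan is to treat the three convergences in a natural order, starting with $1/\varphi_\ep$, then $s/\varphi_\ep(s)$, and finally $\phi_\ep$, since the last is an integral of the first two and its convergence will follow once we have good uniform control on the integrands near $0$ and on compacta.

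First I would analyze $1/\varphi_\ep$. By construction $\varphi_\ep = \varphi$ on $[2\ep,\infty)$, so the two functions agree there and there is nothing to prove away from the origin; the whole issue is the shrinking interval $[0,2\ep]$. On $[0,2\ep]$ we have the interpolation bound \eqref{in between 1}, namely $\tfrac{1}{\varphi_\ep(s)}\le \tfrac{1}{\varphi(s)}+s^{n+\ep}$, together with the trivial lower bound $\tfrac{1}{\varphi_\ep(s)}\ge 0$ coming from $\varphi_\ep>0$ (and in fact $\varphi_\ep$ is a convex combination of $s^{-n-\ep}$ and $\varphi(s)$, so $1/\varphi_\ep$ lies between $1/s^{-n-\ep}=s^{n+\ep}$ and $1/\varphi(s)$ — here one must be slightly careful, $1/\varphi_\ep$ need not lie between these two because $x\mapsto 1/x$ is convex not linear, but \eqref{in between 1} already packages exactly what is needed). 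Since $\lim_{s\to 0^+}\varphi(s)=\infty$ by Assumption \ref{assum}(1), we have $1/\varphi(s)\to 0$ as $s\to 0^+$, and $s^{n+\ep}\to 0$ uniformly for $s\in[0,2\ep]$ as $\ep\to 0$. Combining, $\sup_{s\in[0,2\ep]}\big(1/\varphi_\ep(s)+1/\varphi(s)\big)\to 0$, which forces $\|1/\varphi_\ep - 1/\varphi\|_{L^\infty[0,\infty)}\to 0$ (interpreting $1/\varphi(0)=0$). The argument for $s/\varphi_\ep(s)$ is identical but easier: on $[2\ep,\infty)$ the functions agree, and on $[0,2\ep]$ we multiply \eqref{in between 1} by $s\le 2\ep$ to get $s/\varphi_\ep(s)\le s/\varphi(s)+s^{n+1+\ep}\le 2\ep/\varphi(s)+ (2\ep)^{n+1+\ep}$, and again both terms tend to $0$ uniformly on this interval as $\ep\to0$ (using $s/\varphi(s)\to 0$ as $s\to 0^+$, since $1/\varphi(s)\to0$).

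For $\phi_\ep$, write $\phi_\ep(s)-\phi(s) = \int_0^s\big(\tfrac{1}{\varphi_\ep(t)}-\tfrac{1}{\varphi(t)}\big)dt + \phi(2\ep)$. The second term $\phi(2\ep)\to \phi(0)=0$ since $\phi$ is continuous at $0$ (indeed $\phi(0)=0$). For the integral term, for $s\le 2\ep$ estimate crudely by $\int_0^{2\ep}\big(\tfrac{1}{\varphi_\ep}+\tfrac{1}{\varphi}\big)dt \le 2\ep\cdot\sup_{[0,2\ep]}\big(\tfrac{1}{\varphi_\ep}+\tfrac1\varphi\big)\to 0$; for $s\ge 2\ep$ the integrand vanishes on $[2\ep,s]$ (since $\varphi_\ep=\varphi$ there), so the integral equals the same integral over $[0,2\ep]$, bounded by the same vanishing quantity. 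Hence $\sup_{s\ge 0}|\phi_\ep(s)-\phi(s)|\le \phi(2\ep) + 2\ep\sup_{[0,2\ep]}\big(\tfrac{1}{\varphi_\ep}+\tfrac1\varphi\big)\to 0$.

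The only genuine subtlety — the ``main obstacle,'' such as it is — is handling the behavior on the vanishing interval $[0,2\ep]$ where $\varphi$ itself blows down to $0$ in the reciprocal; but this is precisely controlled by Assumption \ref{assum}(1) ($\varphi\to\infty$ at $0$, so $1/\varphi\to 0$), so that $\sup_{[0,2\ep]} 1/\varphi(t)\to 0$ as $\ep\to0$, and by the elementary estimate \eqref{in between 1} which bounds the perturbation $1/\varphi_\ep$ in terms of $1/\varphi$ plus a power of $s$ that is manifestly small on $[0,2\ep]$. Everything else is a triangle-inequality bookkeeping exercise over the two regions $[0,2\ep]$ and $[2\ep,\infty)$, on the second of which the functions coincide exactly.
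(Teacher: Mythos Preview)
Your proof is correct and follows essentially the same approach as the paper: both arguments exploit that $\varphi_\ep=\varphi$ on $[2\ep,\infty)$, use the bound \eqref{in between 1} together with Assumption~\ref{assum}(1) (so that $1/\varphi\to 0$ at $0$) to control the difference on $[0,2\ep]$, and reduce the $\phi_\ep$ case to an integral over $[0,2\ep]$. The paper records the resulting estimates as the explicit displayed bounds \eqref{phi-phieps}--\eqref{phi-phieps3} (which are reused later in Lemmas~\ref{orlicz norm approx}, \ref{key est} and Corollary~\ref{cor}), whereas your version packages the same content via the triangle inequality $|a-b|\le a+b$ and a crude $2\ep\cdot\sup$ bound; the substance is identical.
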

\begin{proof}
The claims follow from (\ref{approx varphi}), (\ref{in between 1}), (\ref{in between 2}) and
\begin{align}
0\leq \phi_{\ep}(s)-\phi(s)&\leq (2\ep)^{n+1+\ep}+\phi(2\ep)-\phi(\ep)\label{phi-phieps},\\
\Big|\frac{1}{\varphi_{\ep}(s)}-\frac{1}{\varphi(s)}\Big|&\leq \max_{[0,2\ep]}\left(s^{n+\ep}+\frac{2}{\varphi(s)}\right),\label{phi-phieps2}\\
\Big|\frac{s}{\varphi_{\ep}(s)}-\frac{s}{\varphi(s)}\Big|&\leq \max_{[0,2\ep]}\left(s^{n+1+\ep}+\frac{2s}{\varphi(s)}\right).\label{phi-phieps3}
\end{align}
To verify (\ref{phi-phieps}), note that
\begin{align*}
\phi_{\ep}(s)-\phi(s)-\phi(2\varepsilon)=&\int_0^{s}\frac{1}{\varphi_{\varepsilon}(t)}dt-\int_0^{s}\frac{1}{\varphi(t)}dt\\
=&\int_0^{2\varepsilon}\frac{1}{\varphi_{\varepsilon}(t)}dt-\int_0^{2\varepsilon}\frac{1}{\varphi(t)}dt\\
\leq &\int_0^{\varepsilon}t^{n+\ep}dt+\int_{\varepsilon
}^{2\varepsilon}\frac{1}{\varphi(t)}+t^{n+\ep}dt-\int_0^{2\varepsilon}\frac{1}{\varphi(t)}dt\\
=&\int_0^{2\varepsilon}t^{n+\ep}dt-\int_0^{\varepsilon}\frac{1}{\varphi(t)}dt.
\end{align*}
\end{proof}
\begin{lemma}\label{orlicz norm approx}
Let $\mu$ be a finite Borel measure whose support is not contained in a hemisphere. Then for $\ep$ sufficiently small we have
\[\min_{v\in\Sp}\|h_{\hat{v}}\|_{\phi_{\ep},\mu}\geq \frac{1}{2}\min_{v\in\Sp}\|h_{\hat{v}}\|_{\phi,\mu}.\]
\end{lemma}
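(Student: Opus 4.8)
The plan is to exploit the uniform convergence of $\phi_\ep\to\phi$ established in the previous lemma, together with the fact that $v\mapsto\|h_{\hat v}\|_{\phi,\mu}$ is a positive continuous function on the compact sphere $\Sp$, hence bounded below by a positive constant once $\mu$ is not concentrated on a hemisphere. Write $m:=\min_{v\in\Sp}\|h_{\hat v}\|_{\phi,\mu}>0$; the positivity is where the hypothesis on $\mathrm{supp}\,\mu$ enters, since if $\|h_{\hat v}\|_{\phi,\mu}=0$ for some $v$ then $\phi(\langle u,v\rangle_+/\lambda)=0$ $\mu$-a.e.\ for all $\lambda>0$, forcing $\langle u,v\rangle_+=0$ $\mu$-a.e., i.e.\ $\mu$ supported in the hemisphere $\{\langle u,v\rangle\le 0\}$. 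The goal is then: for $\ep$ small, $\|h_{\hat v}\|_{\phi_\ep,\mu}\ge \tfrac12 m$ for every $v$.

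First I would unwind the definition of the Orlicz norm. Set $\lambda_\ep(v):=\|h_{\hat v}\|_{\phi_\ep,\mu}$. By definition of the infimum, for $\lambda$ slightly below $\lambda_\ep(v)$ we have $\frac{1}{|\mu|}\int_{\Sp}\phi_\ep(\langle u,v\rangle_+/\lambda)\,d\mu(u)>\phi_\ep(1)$; while for any $\lambda>\lambda_\ep(v)$ the reverse holds (using monotonicity of $\phi_\ep$ in its argument together with monotonicity in $\lambda$). So it suffices to show that with $\lambda=\tfrac12 m$ one has $\frac{1}{|\mu|}\int_{\Sp}\phi_\ep(\langle u,v\rangle_+/\lambda)\,d\mu>\phi_\ep(1)$ for all $v$ and all small $\ep$; this rules out $\lambda_\ep(v)<\tfrac12 m$. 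Now replace $\phi_\ep$ by $\phi$ up to a uniform error: by \eqref{phi-phieps} (or the companion uniform-convergence statement) there is $\delta(\ep)\to0$ with $\|\phi_\ep-\phi\|_{\infty,[0,\infty)}\le\delta(\ep)$ and also $|\phi_\ep(1)-\phi(1)|\le\delta(\ep)$. Hence
\[
\frac{1}{|\mu|}\int_{\Sp}\phi_\ep\!\left(\frac{\langle u,v\rangle_+}{\lambda}\right)d\mu
\ \ge\ \frac{1}{|\mu|}\int_{\Sp}\phi\!\left(\frac{\langle u,v\rangle_+}{\lambda}\right)d\mu-\delta(\ep),
\qquad \phi_\ep(1)\le \phi(1)+\delta(\ep).
\]
So it is enough to check $\frac{1}{|\mu|}\int_{\Sp}\phi(\langle u,v\rangle_+/\lambda)\,d\mu\ge\phi(1)+3\delta(\ep)$, which, since $\delta(\ep)\to0$, follows once we know the left-hand side exceeds $\phi(1)$ by a fixed positive amount uniformly in $v$.

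That last uniform strict inequality is the crux, and it is really just the definition of $m$ made quantitative. Fix $v$ and let $\lambda^*=\lambda^*(v)=\|h_{\hat v}\|_{\phi,\mu}\ge m$. Taking $\lambda=\tfrac12 m\le\tfrac12\lambda^*<\lambda^*$ and using that $\lambda\mapsto\frac{1}{|\mu|}\int\phi(\langle u,v\rangle_+/\lambda)\,d\mu$ is non-increasing and continuous with value $\phi(1)$ at $\lambda=\lambda^*$ (by the definition of the norm and dominated convergence, the integral being finite because $\mu$ is finite and $\langle u,v\rangle_+\le 1$), we get $\frac{1}{|\mu|}\int\phi(\langle u,v\rangle_+/(\tfrac12 m))\,d\mu\ge\phi(1)$; to upgrade $\ge$ to a uniform strict gap, note $g(v):=\frac{1}{|\mu|}\int_{\Sp}\phi(2\langle u,v\rangle_+/m)\,d\mu-\phi(1)$ is continuous in $v$ (again dominated convergence, $\phi$ continuous and bounded on $[0,2/m]$) and strictly positive at each $v$ — strictness because $\langle u,v\rangle_+/(\tfrac12 m)$ is not $\le$ the corresponding ratio at $\lambda=\lambda^*$ on a set of positive $\mu$-measure, using that $\phi$ is strictly increasing where its argument is positive. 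By compactness of $\Sp$, $g$ attains a positive minimum $2c_0>0$. Choosing $\ep$ small enough that $3\delta(\ep)\le 2c_0$ finishes the argument. The main obstacle is precisely making the gap uniform in $v$ rather than pointwise; I expect this to be handled cleanly by the compactness-plus-continuity argument above, the one subtlety being to confirm $g$ is genuinely continuous (not merely lower semicontinuous), which follows from dominated convergence since all integrands are uniformly bounded by $\max_{[0,2/m]}\phi<\infty$.
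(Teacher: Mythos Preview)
Your argument is correct. The main difference from the paper's proof is one of packaging: you argue \emph{directly} by showing that the gap function
\[
g(v)=\frac{1}{|\mu|}\int_{\Sp}\phi\!\left(\frac{2\langle u,v\rangle_+}{m}\right)d\mu-\phi(1)
\]
is continuous, strictly positive, hence bounded below by some $2c_0>0$ by compactness, and then absorb the uniform error $\|\phi_\ep-\phi\|_\infty$ into this gap. The paper instead argues by contradiction: assuming $c_{\ep_i}\le c/2$ along a subsequence, it uses the one-sided inequality $\phi_\ep\ge\phi$ (rather than full uniform convergence), passes to a limit direction $w=\lim v_{\ep_i}$, and obtains $\|h_{\hat w}\|_{\phi,\mu}\le c/2$, contradicting the definition of $c$.

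Both routes rest on the same compactness of $\Sp$; the paper's version is slightly shorter because it avoids verifying continuity and strict positivity of $g$ separately, folding these into the subsequence-and-limit step. Your version is more explicit and in principle gives a quantitative threshold for $\ep$. One minor simplification available to you: since the paper's estimate \eqref{phi-phieps} already gives $\phi_\ep\ge\phi$ pointwise, you can drop the $-\delta(\ep)$ on the integral side and only need $\phi_\ep(1)\to\phi(1)$; then a single $\delta(\ep)$ (not three) suffices.
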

\begin{proof}
By \cite[Lem.~3.6]{Huang2012}, the values
\[c=\min_{v\in\Sp}\|h_{\hat{v}}\|_{\phi,\mu}~ \mbox{and}~ c_{\ep}=\min_{v\in\Sp}\|h_{\hat{v}}\|_{\phi_{\ep},\mu}~\mbox{are positive.}\]
In view of \cite[Lem.~3]{Haberl2010}, for each $\ep$ there exists $v_{\ep}\in\Sp$ such that
\[\phi_{\ep}(1)=\frac{1}{|\mu|}\int_{\Sp}\phi_{\ep}\left(\frac{h_{\hat{v}_{\ep}}}{c_{\ep}}\right)d\mu.\]
Due to (\ref{in between 2}), $\phi_{\ep}\geq \phi$. Therefore, we have
\begin{align*}
\phi_{\ep}(1)\geq \frac{1}{|\mu|}\int_{\Sp}\phi\left(\frac{h_{\hat{v}_{\ep}}}{c_{\ep}}\right)d\mu.
\end{align*}
Since $\phi$ is increasing, if $c_{\ep_i}\leq \frac{c}{2}$ for a subsequence of $\{\ep_i\}\to0$, then
\begin{align*}
\phi_{\ep_i}(1)&\geq \frac{1}{|\mu|}\int_{\Sp}\phi\left(\frac{2h_{\hat{v}_{\ep_i}}}{c}\right)d\mu.
\end{align*}
Suppose $\lim\limits_{i\to\infty}v_{\ep_i}=w.$ In view of (\ref{phi-phieps}), letting $i\to\infty$ gives
\begin{align*}
\phi(1)&\geq \frac{1}{|\mu|}\int_{\Sp}\phi\left(\frac{2h_{\hat{w}}}{c}\right)d\mu.
\end{align*}
Hence, we arrived at the contradiction $0<c\leq \|h_{\hat{w}}\|_{\phi,\mu}\leq c/2$.
\end{proof}
\begin{lemma}\label{key est}
Let $a,b$ be two positive constants, $\varphi,\varphi_{\ep}$ be defined as above, and
\[\hat{\mathrm{K}}_o:=\{K\in \mathrm{K}_o:  w_-(K)\geq a,~ h_K\leq b \}.\] Suppose $\mu,\{\mu_i\}_{i\in \mathbb{N}}$ are finite Borel measures on $\Sp$ such that $\mu_i\to \mu$ weakly and the support of $\mu$ is not contained in a closed hemisphere. Then there exist positive constants $\lambda,\Lambda$ depending on $a,b,\varphi,\mu$ such that for all $K\in \hat{\mathrm{K}}_o$, $i$ sufficiently large and $\ep$ small enough,
\begin{align*}
\lambda\leq \int_{\Sp}\frac{h_K}{\varphi(h_K)}d\mu_i\leq \Lambda,\quad
\lambda\leq \int_{\Sp}\frac{h_K}{\varphi_{\ep}(h_K)}d\mu_i\leq \Lambda.
\end{align*}
\end{lemma}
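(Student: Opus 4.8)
The plan is to reduce everything to two tasks: bounding $h_K/\varphi(h_K)$ pointwise on $\Sp$ in a way that is uniform over $K\in\hat{\mathrm K}_o$, and controlling how much mass $\mu_i$ can concentrate near a great subsphere. First I would note that since $a\le w_-(K)$ and $h_K\le b$, we have $h_K(u)\in[a/2\cdot\langle u,v_K\rangle_+\text{-type lower bounds},\,b]$; more precisely, for $K\in\mathrm K_o$ one has $0< h_K\le b$ everywhere, and there is \emph{no} uniform positive pointwise lower bound on $h_K$ (the support function can vanish in directions where the body is thin through the origin). So the pointwise lower bound on $h_K/\varphi(h_K)$ is not available, and the upper bound on the integral is the easy half while the lower bound is the crux.

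For the \textbf{upper bounds}: the function $s\mapsto s/\varphi(s)$ is continuous on $(0,\infty)$; as $s\to 0^+$ it tends to $0$ because $\varphi(s)\to\infty$ (Assumption \ref{assum}(1)), so it extends continuously to $[0,b]$ and is bounded there by some $\Lambda_0=\Lambda_0(\varphi,b)$. Hence $\int_{\Sp} h_K/\varphi(h_K)\,d\mu_i\le \Lambda_0\,\mu_i(\Sp)$, and since $\mu_i\to\mu$ weakly, $\mu_i(\Sp)\to\mu(\Sp)<\infty$, so for $i$ large this is $\le \Lambda:=2\Lambda_0\,\mu(\Sp)$. The same argument applies verbatim to $\varphi_\ep$ once I know $s\mapsto s/\varphi_\ep(s)$ is uniformly bounded on $[0,b]$ for small $\ep$; but that is exactly the content of the preceding lemma (uniform convergence of $s/\varphi_\ep(s)$ to $s/\varphi(s)$ on $[0,\infty)$), so $\sup_{[0,b]} s/\varphi_\ep(s)\le 2\Lambda_0$ for $\ep$ small. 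This disposes of both upper bounds.

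For the \textbf{lower bounds}: I would use the Orlicz-norm machinery. Since $h_K\le b$, monotonicity and homogeneity of the Orlicz norm (\ref{basic prop orlicz}) give, after comparing $h_K$ with $h_{\hat v_K}$ for a suitable direction $v_K$ where $K$ is "fat" (using $w_-(K)\ge a$, so $h_K(u)+h_K(-u)\ge a$ for all $u$, hence $h_K\ge \frac{a}{2} h_{\hat v}(\cdot)$ pointwise for \emph{every} $v$ up to the reflection — more carefully, for each $u$ either $h_K(u)\ge a/2$ or $h_K(-u)\ge a/2$), a bound of the form $\|h_K\|_{\phi,\mu}\ge c'(a)\min_{v\in\Sp}\|h_{\hat v}\|_{\phi,\mu}=:c'(a)\,c>0$, where positivity of $c$ comes from \cite[Lem.~3.6]{Huang2012} and the support hypothesis on $\mu$. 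Then I would translate a lower bound on $\|h_K\|_{\phi,\mu_i}$ into a lower bound on $\int h_K/\varphi(h_K)\,d\mu_i$: with $\lambda_0:=\|h_K\|_{\phi,\mu_i}$, by definition of the infimum $\frac{1}{|\mu_i|}\int\phi(h_K/\lambda_0)\,d\mu_i\ge\phi(1)$, and since $\phi'=1/\varphi$ and $h_K/\lambda_0\le b/\lambda_0$ is bounded, a mean-value / convexity estimate converts this into $\int \phi'(h_K)h_K\,d\mu_i=\int h_K/\varphi(h_K)\,d\mu_i\ge \lambda$ for an explicit $\lambda=\lambda(a,b,\varphi,\mu)>0$; here I use $h_K\le b$ to control $\phi(h_K/\lambda_0)$ from above by a multiple of $h_K/\lambda_0 \cdot \sup_{[0,b]}\phi'$ is the wrong direction, so instead I bound $\phi(t)\le t\,\phi'(\xi)$ from below by noting $\phi$ is concave-or-not... — the cleanest route is: $\phi(h_K/\lambda_0)\le (h_K/\lambda_0)\,\sup_{(0,b/\lambda_0]}\phi'$ fails without monotonicity of $\phi'$, so I instead directly use $\phi(t)=\int_0^t \varphi^{-1}\le t\cdot\|\varphi^{-1}\|_{L^\infty[0,b/\lambda]}$ on the relevant range and feed $\lambda_0\ge c'(a)c$ back in. Finally, for the $\mu_i$ and $\varphi_\ep$ versions I invoke weak convergence $\mu_i\to\mu$ (so $\min_v\|h_{\hat v}\|_{\phi,\mu_i}\to\min_v\|h_{\hat v}\|_{\phi,\mu}$, using continuity of $v\mapsto h_{\hat v}$ and of the Orlicz norm in the measure, plus a compactness argument on $\Sp$ as in \cite{Haberl2010,Huang2012}) and Lemma \ref{orlicz norm approx} (so $\min_v\|h_{\hat v}\|_{\phi_\ep,\mu}\ge \frac12\min_v\|h_{\hat v}\|_{\phi,\mu}$), together with the uniform convergence $\phi_\ep\to\phi$ and $1/\varphi_\ep\to1/\varphi$ on $[0,\infty)$ from the preceding lemma.

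The \textbf{main obstacle} is the lower bound, specifically making the constant depend only on $a,b,\varphi,\mu$ and not on $K$, $i$, or $\ep$: this requires that the quantity $\min_{v\in\Sp}\|h_{\hat v}\|_{\phi,\mu}$ be bounded below \emph{uniformly in $i$} when $\mu$ is replaced by $\mu_i$. Since weak convergence alone does not prevent $\mu_i$ from concentrating onto a great subsphere (where every $h_{\hat v}$ can be made small in norm), one must use that the \emph{limit} $\mu$ is not supported in a hemisphere: a standard argument shows the functional $\mu\mapsto\min_v\|h_{\hat v}\|_{\phi,\mu}$ is lower semicontinuous (in fact continuous) under weak convergence on the relevant set of measures, so it stays bounded below for large $i$; carefully packaging this—via the extremal-direction selection of \cite[Lem.~3]{Haberl2010} and the positivity criterion of \cite[Lem.~3.6]{Huang2012}—is where the real work lies, and I would present it as the heart of the proof, with the upper bounds dispatched in a sentence.
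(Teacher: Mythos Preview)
Your treatment of the upper bounds is correct and matches the paper almost verbatim: bound $s/\varphi(s)$ on $[0,b]$ using $\varphi(s)\to\infty$ as $s\to 0^+$, then use $|\mu_i|\to|\mu|$ and the uniform convergence $s/\varphi_\ep(s)\to s/\varphi(s)$.

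For the lower bounds, however, your route through Orlicz norms is \emph{not} what the paper does, and as you yourself signal with the string of retractions, it has a genuine gap. You succeed in getting $\|h_K\|_{\phi,\mu_i}\ge \tfrac{a}{2}\min_v\|h_{\hat v}\|_{\phi,\mu_i}$ (this step is fine: from $w_-(K)\ge a$ there is some $w\in\Sp$ with $h_K\ge \tfrac{a}{2}h_{\hat w}$, since $K$ contains a segment $[0,x]$ with $|x|\ge a/2$). But the conversion $\|h_K\|_{\phi,\mu_i}\ge\lambda_0 \Longrightarrow \int h_K/\varphi(h_K)\,d\mu_i\ge\lambda$ does not go through without extra structure on $\varphi$. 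The identity $\frac{1}{|\mu_i|}\int\phi(h_K/\lambda_0)\,d\mu_i=\phi(1)$ involves $\phi$ evaluated at $h_K/\lambda_0$, whereas the target integral involves $\phi'(h_K)=1/\varphi(h_K)$; relating these requires comparing $\phi(t)$ with $t\phi'(\lambda_0 t)$, and without monotonicity of $\phi'$ (equivalently of $\varphi$) there is no inequality in the needed direction. Your final attempt, $\phi(t)\le t\cdot\|\varphi^{-1}\|_{L^\infty[0,b/\lambda_0]}$, only yields a lower bound on $\int h_K\,d\mu_i$, not on $\int h_K/\varphi(h_K)\,d\mu_i$, and since $\inf_{[0,b]}1/\varphi=0$ you cannot pass from one to the other.

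The paper avoids Orlicz norms entirely here and argues by two nested compactness/contradiction steps. First, for each fixed $i$ (large), if $\inf_{K\in\hat{\mathrm K}_o}\int h_K/\varphi(h_K)\,d\mu_i=0$, Blaschke selection produces $\hat K\in\hat{\mathrm K}_o$ with $\int h_{\hat K}/\varphi(h_{\hat K})\,d\mu_i=0$; hence $\mu_i(\{h_{\hat K}>0\})=0$, but $\{h_{\hat K}>0\}$ contains an open hemisphere (pick any interior point $x$ of $\hat K$ and use $\{u:\langle u,x\rangle>0\}$), contradicting that $\mu_i$ is eventually positive on every open hemisphere. Second, if the resulting constants $c_i$ are not bounded below, pick $K_i\in\hat{\mathrm K}_o$ with $\int h_{K_i}/\varphi(h_{K_i})\,d\mu_i\to 0$, pass to $K_i\to K\in\hat{\mathrm K}_o$, and split
\[
\int\frac{h_{K_i}}{\varphi(h_{K_i})}\,d\mu_i-\int\frac{h_{K}}{\varphi(h_{K})}\,d\mu
=\int\!\Big(\tfrac{h_{K_i}}{\varphi(h_{K_i})}-\tfrac{h_{K}}{\varphi(h_{K})}\Big)d\mu_i+\Big(\int\tfrac{h_{K}}{\varphi(h_{K})}d\mu_i-\int\tfrac{h_{K}}{\varphi(h_{K})}d\mu\Big),
\]
using uniform continuity of $s/\varphi(s)$ on $[0,b]$ and weak convergence. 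The $\varphi_\ep$ case then follows from the $\varphi$ case plus \eqref{phi-phieps3}, exactly as you do for the upper bound. This direct argument uses only that $s/\varphi(s)$ is uniformly continuous on $[0,b]$ and the hemispherical support hypothesis on $\mu$, and sidesteps the Orlicz-norm-to-integral conversion entirely.
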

\begin{proof}
Recall that $s/\varphi(s)$ is uniformly continuous on $[0,b]$.

First, we show each $\mu_i$ satisfies
\[\int_{\Sp}\frac{h_K}{\varphi(h_K)}d\mu_i\geq c_i\quad \forall K\in \hat{\mathrm{K}}_o,\]
for some constant $c_i>0.$
If the lower bound was zero, then by the Blaschke selection theorem we could find a sequence of convex bodies converging to a convex body $\hat{K}\in\hat{\mathrm{K}}_o$ with $\int_{\Sp}\frac{h_{\hat{K}}}{\varphi(h_{\hat{K}})}d\mu_i=0$. Thus, we would have $\mu_i(\{h_{\hat{K}}>0\})=0$. But the set $\{h_{\hat{K}}>0\}$ contains an open hemisphere, and for large $i$, $\mu_i(\Sigma)>0$ for any open hemisphere $\Sigma$.

Second, we show that $\{c_i\}$ is uniformly bounded below away from zero. Otherwise, we can find a sequence $\{K_i\}\subset \hat{\mathrm{K}}_o$ such that
\[\int_{\Sp}\frac{h_{K_i}}{\varphi(h_{K_i})}d\mu_i\to 0\quad\hbox{as}~i\to\infty.\]
Suppose $K_i\to K\in \hat{\mathrm{K}}_o$ as $i\to\infty$. We have
\begin{align*}
\int_{\Sp}\frac{h_{K_i}}{\varphi(h_{K_i})}d\mu_i-\int_{\Sp}\frac{h_{K}}{\varphi(h_{K})}d\mu
=&\int_{\Sp}\left(\frac{h_{K_i}}{\varphi(h_{K_i})}-
\frac{h_{K}}{\varphi(h_{K})}\right)d\mu_i\\
&+\int_{\Sp}\frac{h_{K}}{\varphi(h_{K})}d\mu_i-\int_{\Sp}\frac{h_{K}}{\varphi(h_{K})}d\mu.
\end{align*}
In view of (\ref{phi-phieps3}), each line of the right-hand side goes to zero thus yielding a contradiction.

Finally, due to (\ref{phi-phieps3}), for every $\delta>0$, there exist $\ep_0,N$, such that for all $\ep\leq \ep_0$, $i\geq N$ and $K\in \hat{\mathrm{K}}_o,$
we have
\begin{align*}
\int_{\Sp}\frac{h_K}{\varphi_{\ep}(h_K)}d\mu_i=&\int_{\Sp}\frac{h_K}{\varphi(h_K)}d\mu_i
+\int_{\Sp}\left(\frac{h_K}{\varphi_{\ep}(h_K)}-\frac{h_K}{\varphi(h_K)}\right)d\mu_i\\
\geq& \int_{\Sp}\frac{h_K}{\varphi(h_K)}d\mu_i-\delta |\mu|\\
\geq& c_i-\delta |\mu|.
\end{align*}
Therefore, the uniform lower bound follows by taking $\delta$ small enough.

Now we prove the upper bounds. For $i$ sufficiently large, we have
\[\int_{\Sp}\frac{h_K}{\varphi(h_K)}d\mu_i\leq |\mu_i|\max_{s\in [0,b]}\frac{s}{\varphi(s)}\leq 2|\mu|\max_{s\in [0,b]}\frac{s}{\varphi(s)}.\]

Moreover, due to (\ref{phi-phieps3}), there exist $\ep_0,N$, such that for all $\ep\leq \ep_0$, $i\geq N$ and $K\in \hat{\mathrm{K}}_o,$ we have
\begin{align*}
\int_{\Sp}\frac{h_K}{\varphi_{\ep}(h_K)}d\mu_i\leq& \int_{\Sp}\frac{h_K}{\varphi(h_K)}d\mu_i+|\mu|.
\end{align*}
\end{proof}

\begin{proposition}\label{prop}
Suppose $\varphi:(0,\infty)\to (0,\infty)$ is a smooth such that Assumption \ref{assum} is satisfied.
Let $f:\Sp\to (0,\infty)$ be smooth and $\ep$ be sufficiently small.
There exists a closed, smooth, strictly convex hypersurface $M_{\ep}$ with positive support function such that
\begin{align}\label{approx}
f\varphi_{\ep}(h)\sigma_n=\gamma_{\ep},\quad\hbox{where}\quad \gamma_{\ep}=\frac{\int_{\Sp} h\sigma_nd\theta}{\int_{\Sp}\frac{h}{\varphi_{\ep}(h)}d\mu_f}.
\end{align}
In addition, we have
\[w_+(M_{\ep})\leq \frac{4}{\min\limits_{v\in\Sp}\|h_{\hat{v}}\|_{\phi,\mu_f}},\quad w_-(M_{\ep})\geq \left(\frac{\min\limits_{v\in\Sp}\|h_{\hat{v}}\|_{\phi,\mu_f}}{4}\right)^nV(\mathbb{S}^n).\]
Moreover, for some positive constants $\lambda,\Lambda$ depending on $\varphi,\mu_f$,
\[\lambda\leq \gamma_{\ep}\leq \Lambda.\]
Also, if $f$ is invariant under a closed group $G\subset O(n+1),$ then $M_{\ep}$ can be chosen to be $G$-invariant.
\end{proposition}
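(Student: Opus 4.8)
The equation \eqref{approx} is precisely the stationary equation of \eqref{F-param} after replacing $\varphi$ by $\varphi_\ep$: since $d\mu_f=\tfrac1f d\theta$, the global factor $\zeta$ of that flow equals $\gamma_\ep=\left(\int h\sigma_nd\theta\right)/\left(\int h\varphi_\ep(h)^{-1}d\mu_f\right)$, so a stationary support function satisfies $f\varphi_\ep(h)\sigma_n=\gamma_\ep$. The plan is to run \eqref{F-param} with $\varphi\rightsquigarrow\varphi_\ep$ from $M_0=$ the unit sphere (which is invariant under all of $O(n+1)$, in particular under $G$), establish a priori estimates, and pass to a subsequential limit. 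Note that $\varphi_\ep$ is smooth and positive on $(0,\infty)$; that $\phi_\ep(s)=\int_0^s\varphi_\ep(t)^{-1}dt+\phi(2\ep)$ is finite for every $s>0$ (near $0$ the integrand is $t^{n+\ep}$) and tends to $\infty$ as $s\to\infty$; and that the additive constant is invisible to the flow. In particular $\varphi_\ep$ is of the analytic type of case (3-a) of \Cref{main theorem1}, so as in \Cref{monotonicity} the flow with $\varphi_\ep$ conserves $\int\phi_\ep(h_{M_t})d\mu_f$ (equal to $|\mu_f|\phi_\ep(1)$ by the choice of $M_0$) and makes $V(M_t)$ non-decreasing; the point of the proof is that, $f$ being no longer even, the remaining estimates of \Cref{reg est} must be recovered by other means.

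\emph{A priori estimates.} The upper bound on $h$ and the width bound come together. If $R=\max h_{M_t}$ is attained at $v_0$, a boundary point $y$ of $M_t$ with $\langle y,v_0\rangle=R$ satisfies $|y|\ge R$ and $[o,y]\subset M_t$, hence $h_{M_t}\ge R\,h_{\widehat{y/|y|}}$. Since $\phi_\ep$ is increasing and the conserved value is $|\mu_f|\phi_\ep(1)$, we get $\tfrac1{|\mu_f|}\int\phi_\ep\!\big(R\,h_{\widehat{y/|y|}}\big)d\mu_f\le\phi_\ep(1)$, so $\lambda=1/R$ is admissible in the definition of $\|h_{\widehat{y/|y|}}\|_{\phi_\ep,\mu_f}$ and therefore $R\le 1/\min_{v\in\Sp}\|h_{\hat v}\|_{\phi_\ep,\mu_f}$. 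Using \Cref{orlicz norm approx}, for $\ep$ small this gives
\[
w_+(M_t)\le 2R\le \frac{4}{\min_{v\in\Sp}\|h_{\hat v}\|_{\phi,\mu_f}},
\]
and combined with $V(M_t)\ge V(M_0)$ and the elementary inequality $V\le w_-w_+^n$ used in \Cref{C0 estimate}, it yields a lower bound for $w_-(M_t)$ of the stated form — all uniform in $\ep$. The estimate with no analogue in \Cref{C0 estimate} is a positive lower bound $h\ge a_\ep$ along the flow, needed to keep it nondegenerate: one applies the maximum principle at a spatial minimum of $h$, where $\sigma_n\ge h^n$, and exploits that $\varphi_\ep(s)=s^{-n-\ep}$ for $s\le\ep$ makes $\Theta\sigma_n=fh\varphi_\ep(h)\sigma_n\ge f_{\min}h^{1-\ep}$ beat $\zeta h$ once $h$ is small; here $\zeta$ is bounded above because $V(M_t)$ and $h$ are bounded above and $\int h\varphi_\ep(h)^{-1}d\mu_f$ is bounded below via the $w_-$ bound, and bounded below away from zero using $V(M_t)\ge V(M_0)$. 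The resulting $a_\ep$ may degenerate as $\ep\to0$, which is harmless.

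With two-sided $C^0$ control of $h$ and of $\zeta=\gamma_\ep$, the Gauss-curvature and principal-curvature bounds follow verbatim from the lemmas of \Cref{reg est}, whose proofs use only \Cref{ev eqs}, the $C^0$ bounds, and the bounds on $\eta$ — never the evenness of $f$. Higher regularity and long-time existence are then standard parabolic theory, and \Cref{monotonicity} together with $\int h\sigma_nd\theta\le C$ gives $\liminf_{t\to\infty}\int_{\Sp}(\pa_t h)^2=0$, so a subsequence of $M_t$ converges in $C^\infty$ to a stationary solution $M_\ep$ of \eqref{approx}, closed, smooth, strictly convex, with positive support function. It inherits the width bounds above; since then $M_\ep\in\hat{\mathrm{K}}_o$ with constants depending only on $\varphi$ and $\mu_f$, \Cref{key est} applied to the constant sequence $\mu_i\equiv\mu=\mu_f$ bounds $\int h_{M_\ep}\varphi_\ep(h_{M_\ep})^{-1}d\mu_f$ from above and below, whence $\lambda\le\gamma_\ep\le\Lambda$ with $\lambda,\Lambda$ depending only on $\varphi,\mu_f$; and $G$-invariance of $M_\ep$ follows from uniqueness of the flow together with the $G$-equivariance of \eqref{F-param} and the $G$-invariance of $M_0$ and $\mu_f$.

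The main obstacle is exactly the departure from the setting of \Cref{main theorem1}: producing the lower bound on $h$ along the flow and the width bounds with constants independent of $\ep$ and depending only on $\varphi$ and $\mu_f$, without the $o$-symmetry that drove the corresponding arguments in \Cref{C0 estimate}. This is where the super-homogeneous model $\varphi_\ep(s)=s^{-n-\ep}$ near $0$ (which forces strong expansion wherever $h$ is small) and the Orlicz-norm estimates of \Cref{orlicz norm approx}--\Cref{key est} — the parabolic counterpart of the Chou--Wang scheme — are indispensable.
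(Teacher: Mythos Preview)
Your proposal is correct and follows essentially the same route as the paper's proof: run the flow \eqref{nf} with $\varphi_\ep$ from the unit sphere, obtain the upper bound on $h$ via the Orlicz-norm argument and \Cref{orlicz norm approx}, the lower width bound from volume monotonicity, the $\zeta$-bounds from \Cref{key est} (you phrase this more informally during the flow and invoke the lemma explicitly only for the limiting $\gamma_\ep$, while the paper invokes it directly for both), and the crucial lower bound on $h$ from the maximum principle using $\varphi_\ep(s)=s^{-n-\ep}$ near $0$; then the machinery of \Cref{reg est} and \Cref{sec:conv} gives long-time existence and subconvergence, with $G$-invariance inherited from the initial sphere. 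The only cosmetic difference is that the paper takes $R$ to be the maximal distance from the origin to $\partial M_{t,\ep}$ rather than $\max h$, but for a convex body containing the origin these coincide.
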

\begin{proof}
Let $M_0$ be the unit sphere. We seek a family of smooth, strictly convex hypersurfaces $\{M_{t,\ep}\}$ whose support functions satisfy
\begin{align}\label{nf-approx}
\left\{
  \begin{array}{ll}
\partial_th= f  h\varphi_{\ep}(h)\sigma_n-\frac{\int_{\Sp} h\sigma_nd\theta}{\int_{\Sp}\frac{h}{\varphi_{\ep}(h)}d\mu_f} h;\\
h(\cdot,0)\equiv1.
  \end{array}
\right.
\end{align}
We prove the flow hypersurfaces subconverge smoothly to a solution of (\ref{approx}) with the desired properties. To do this, all we need is to obtain $C_0$-estimates; the higher order regularity estimates and convergence follow as in Sections \ref{reg est} and \ref{sec:conv}. Moreover, the statement regarding $G$-invariance follows, since the flow hypersurfaces are $G$-invariant.

Suppose $T_{\star}$ is the maximal time that the flow hypersurfaces are smooth, strictly convex and contain the origin in their interiors. First, we obtain a uniform upper bound for support functions on $[0,T_{\star})$ independent of $T_{\star}$. Then using this bound, we establish a uniform positive lower bound on the support functions on $[0,T_{\star})$ independent of $T_{\star}$. Thus the flow hypersurfaces will always contain the origin in their interiors as long as they are smooth and strictly convex.

Let us put $h_{t,\ep}=h_{M_{t,\ep}}$ and define
\[\mathcal{E}(t)=\frac{1}{|\mu_f|}\int_{\Sp}\phi_{\ep}(h_{t,\ep})d\mu_f.\]
Since $\frac{d}{dt}\mathcal{E}(t)=0,$ we have $\mathcal{E}(t)=\mathcal{E}(0)=\phi_{\ep}(1).$
Thus from the definition of the Orlicz norm it follows that
\begin{align*}
\|h_{t,\ep}\|_{\phi_{\ep},\mu_f}\leq 1.
\end{align*}
Choose $v\in\Sp$ and $R>0$, such that $Rv\in M_{t,\ep}$ and $|Rv|$ is maximal. The line segment joining the origin and $Rv$ is the largest such line segment contained in the enclosed region by $M_{t,\varepsilon}$ and its support function is $Rh_{\hat v}$. Therefore, by this inclusion $h_{t,\varepsilon}\geq Rh_{\hat v}. $
Due to  (\ref{basic prop orlicz}),
\begin{equation}\label{R}
R\min_{v\in\Sp}\|h_{\hat{v}}\|_{\phi_{\ep},\mu_f}\leq R\|h_{\hat{v}}\|_{\phi_{\ep},\mu_f}\leq \|h_{t,\ep}\|_{\phi_{\ep},\mu_f}\leq 1.
\end{equation}
By Lemma \ref{orlicz norm approx}, for $\ep$ sufficiently small, we have
\[\min_{v\in\Sp}\|h_{\hat{v}}\|_{\phi_{\ep},\mu_f}\geq \frac{1}{2}\min_{v\in\Sp}\|h_{\hat{v}}\|_{\phi,\mu_f}.\]
Together with \eqref{R}, this last inequality yields an upper bound on $R=\max h_{t,\ep}$ and the maximum width:
\[w_+(M_{t,\varepsilon})\leq 2R\leq \frac{4}{\min\limits_{v\in\Sp}\|h_{\hat{v}}\|_{\phi,\mu_f}}.\] Moreover, since the volume is non-decreasing along the flow, the lower bound on the minimum width follows as well:
\begin{align*}
V(\mathbb{S}^n)&\leq V(M_{t,\varepsilon})\leq w_-(M_{t,\varepsilon})w_+(M_{t,\varepsilon})^{n}\\
&\leq \left(\frac{4}{\min\limits_{v\in\Sp}\|h_{\hat{v}}\|_{\phi,\mu_f}}\right)^nw_-(M_{t,\varepsilon}).
\end{align*}
Thus by a special case of Lemma \ref{key est} ($\mu=\mu_i=d\mu_f$) we obtain
\[\lambda\leq\frac{\int_{\Sp} h\sigma_nd\theta}{\int_{\Sp} \frac{h}{\varphi_{\ep}(h)}d\mu_f}\leq \Lambda\]
for some positive constants. Therefore, as soon as $h_{\min}\leq \ep$,
\begin{align*}
\partial_t\log h_{\min}\geq& \varphi_{\ep}(h_{\min})h_{\min}^nf_{\min}-\Lambda\\
            =&h_{\min}^{-\ep}f_{\min}-\Lambda.
\end{align*}
Thus for each $\ep$, $h_{t,\ep}$ is uniformly bounded below away from zero.
\end{proof}
\begin{corollary}\label{cor}
Suppose $\varphi:(0,\infty)\to (0,\infty)$ is a smooth function such that Assumption \ref{assum} is satisfied.
Let $f:\Sp\to (0,\infty)$ be smooth. Then there exists
 a convex body $K\in \mathrm{K}_o$ such that
\begin{align*}
 dS_K= \frac{\gamma}{\varphi(h_K)}d\mu_f\quad \mbox{for a constant}~\gamma>0.
\end{align*}
In addition, we have
\[w_+(K)\leq \frac{4}{\min\limits_{v\in\Sp}\|h_{\hat{v}}\|_{\phi,\mu_f}},\quad w_-(K)\geq \left(\frac{\min\limits_{v\in\Sp}\|h_{\hat{v}}\|_{\phi,\mu_f}}{4}\right)^nV(\Sp),\]
and for some positive constants depending only on $\varphi,\mu_f$, \[\lambda\leq\gamma\leq \Lambda.\]
Moreover, if $f$ is invariant under a closed group $G\subset O(n+1),$ then $K$ can be chosen to be invariant under $G.$
\end{corollary}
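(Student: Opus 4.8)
The plan is to obtain the corollary from \Cref{prop} by a limiting argument in $\ep$, mirroring Step 3 of the outline in \Cref{subsec approx}. First I would fix the smooth $f$ and, for each small $\ep>0$, invoke \Cref{prop} to produce a smooth, strictly convex $M_\ep$ with positive support function satisfying $f\varphi_\ep(h_{M_\ep})\sigma_n=\gamma_\ep$, which upon integration against $\frac{1}{\varphi_\ep(h_{M_\ep})}d\mu_f$ and using $dS_{M_\ep}=\sigma_n\,d\theta$ translates into the Monge--Amp\`ere-type equation $dS_{K_\ep}=\frac{\gamma_\ep}{\varphi_\ep(h_{K_\ep})}d\mu_f$ for the convex body $K_\ep$ enclosed by $M_\ep$ (here I would just note that $\gamma_\ep$ is exactly the normalizing constant appearing in \eqref{approx}). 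From the same proposition I already have, uniformly in $\ep$ small, the width bounds
\[
w_+(K_\ep)\leq \frac{4}{\min_{v\in\Sp}\|h_{\hat v}\|_{\phi,\mu_f}},\qquad w_-(K_\ep)\geq \left(\frac{\min_{v\in\Sp}\|h_{\hat v}\|_{\phi,\mu_f}}{4}\right)^nV(\Sp),
\]
together with $\lambda\leq\gamma_\ep\leq\Lambda$ for constants depending only on $\varphi,\mu_f$.

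Next I would extract a convergent subsequence. The lower bound on $w_-$ prevents degeneration and, combined with the upper bound on $w_+$, keeps $\{K_\ep\}$ in a compact class (in fact in $\hat{\mathrm K}_o$ after recentering, but here the origin stays in the interior because $w_-\geq a>0$ and the bodies are $o$-related via the flow construction), so by the Blaschke selection theorem $K_\ep\to K$ in the Hausdorff metric along a subsequence $\ep\to 0$, with $K\in\mathrm K_o$ inheriting the same width bounds. Since $\gamma_\ep$ lies in $[\lambda,\Lambda]$, after passing to a further subsequence $\gamma_\ep\to\gamma\in[\lambda,\Lambda]$. It remains to pass to the limit in $dS_{K_\ep}=\frac{\gamma_\ep}{\varphi_\ep(h_{K_\ep})}d\mu_f$. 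Hausdorff convergence $K_\ep\to K$ gives $h_{K_\ep}\to h_{K}$ uniformly on $\Sp$ and weak convergence $S_{K_\ep}\to S_{K}$ of surface area measures (a standard continuity property of the Minkowski map, e.g.\ from \cite{Schneider2013a}). On the right-hand side, the uniform convergence $1/\varphi_\ep\to 1/\varphi$ on $[0,\infty)$ established in the lemma preceding \Cref{orlicz norm approx}, combined with the uniform two-sided width bounds which confine all $h_{K_\ep}$ to a fixed interval $[a,b]$ on which $1/\varphi$ is uniformly continuous, yields $\frac{1}{\varphi_\ep(h_{K_\ep})}\to\frac{1}{\varphi(h_{K})}$ uniformly; since $\mu_f$ is a fixed finite measure, $\frac{\gamma_\ep}{\varphi_\ep(h_{K_\ep})}d\mu_f\to\frac{\gamma}{\varphi(h_{K})}d\mu_f$ weakly, and by uniqueness of weak limits $dS_K=\frac{\gamma}{\varphi(h_K)}d\mu_f$. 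The $G$-invariance is immediate: each $M_\ep$ (hence $K_\ep$) is $G$-invariant by \Cref{prop}, and $G$-invariance is preserved under Hausdorff limits.

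The main obstacle is the passage to the limit on the left-hand side, i.e.\ controlling the surface area measures $S_{K_\ep}$, since a priori the $K_\ep$ need not have uniformly bounded curvature; here I rely only on weak convergence $S_{K_\ep}\rightharpoonup S_K$, which is exactly the content of the weak continuity of the surface area measure under Hausdorff convergence, so no curvature estimates uniform in $\ep$ are needed. A secondary technical point is ensuring that the origin remains in the interior of $K$ (not just of each $K_\ep$); this follows because $w_-(K)\geq\left(\frac{\min_v\|h_{\hat v}\|_{\phi,\mu_f}}{4}\right)^nV(\Sp)>0$ forces $K$ to be non-degenerate, and combined with the way $K_\ep$ is produced (as the flow limit of the unit sphere under \eqref{nf-approx}, which keeps the origin inside) the limit body $K$ lies in $\mathrm K_o$. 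Finally I would remark, as indicated in the outline, that the smoothness hypothesis on $\varphi$ is not essential: a further approximation of a merely continuous $\varphi$ by smooth $\varphi$'s satisfying \Cref{assum}, together with the same Blaschke-selection-plus-weak-continuity argument, removes it — but for the statement of this corollary $\varphi$ is already assumed smooth, so this last remark is only for the later use of the corollary.
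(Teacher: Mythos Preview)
Your proposal is correct and follows essentially the same route as the paper: apply \Cref{prop} along a sequence $\ep\to 0$, use Blaschke selection together with the uniform width and $\gamma_\ep$ bounds, and pass to the limit via weak continuity of the surface area measure and the uniform convergence $1/\varphi_\ep\to 1/\varphi$.

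One small slip worth flagging: the width bounds do \emph{not} confine $h_{K_\ep}$ to a fixed interval $[a,b]$ with $a>0$ uniformly in $\ep$. The bodies $K_\ep$ are not $o$-symmetric, so a lower bound on $w_-(K_\ep)$ says nothing about $\min h_{K_\ep}$; indeed \Cref{prop} only produces an $\ep$-dependent lower bound on the support function. This does not actually damage your argument, because by \Cref{assum}(1) one has $\lim_{s\to 0^+}\varphi(s)=\infty$, so $1/\varphi$ extends continuously to $[0,\infty)$ and the convergence $\frac{1}{\varphi_\ep(h_{K_\ep})}\to\frac{1}{\varphi(h_K)}$ goes through on $[0,b]$ via \eqref{phi-phieps2}. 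The paper makes this passage explicit by splitting
\[
\frac{1}{\varphi_i(h_{K_i})}-\frac{1}{\varphi(h_K)}
=\left(\frac{1}{\varphi_i(h_{K_i})}-\frac{1}{\varphi(h_{K_i})}\right)
+\left(\frac{1}{\varphi(h_{K_i})}-\frac{1}{\varphi(h_K)}\right)
\]
and handling the two pieces separately; your version compresses this into one step. Likewise, your justification that the limit $K$ lies in $\mathrm K_o$ should simply be that each $K_\ep$ contains the origin, hence so does the Hausdorff limit; the appeal to $w_-$ here is not the relevant mechanism.
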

\begin{proof}
Let $\ep=\frac{1}{i}$ in Proposition \ref{prop} and $\varphi_i=\varphi_{\ep}$. For each $i$, we have a solution $K_i$ containing the origin in its interior such that
\[dS_{K_i}=\frac{\gamma_i}{\varphi_{i}(h_{K_i})}d\mu_f.\]
Moreover, $\{h_{K_i}\}$ is uniformly bounded above, we have a uniform positive lower bound on $w_-(K_i)$, and $\gamma_i\in [\lambda,\Lambda]$. Thus by the Blaschke selection theorem, we may find a subsequence of $\{K_i\}$ converging to a limit $K\in \mathrm{K}_o$ while $\gamma_i\to\gamma\in [\lambda,\Lambda]$. Note that if $f$ is invariant under $G$, then all $K_i$ and thus $K$ can be chosen to be invariant under $G$.

Suppose $\psi\in C(\Sp)$. Note that
\begin{align*}
\int_{\Sp}\psi\left(\frac{1}{\varphi_{i}(h_{K_i})}-\frac{1}{\varphi(h_{K})}\right)d\mu_f
=&\int_{\Sp}\psi\left(\frac{1}{\varphi_{i}(h_{K_i})}-\frac{1}{\varphi(h_{K_i})}\right)d\mu_f\\
&+\int_{\Sp}\psi\left(\frac{1}{\varphi(h_{K_i})}-\frac{1}{\varphi(h_{K})}\right)d\mu_f,
\end{align*}
In view of (\ref{phi-phieps2}), the first line on the right-hand side tends to zero as $i\to\infty.$ Since $1/\varphi$ is continuous and $h_{K_i}$ converges to $h_{K}$ uniformly, the term on the second line converges to zero as well. Since $S_{K_i}\to S_K$ weakly (cf., \cite{Schneider2013a}), $K$ satisfies
\[\int_{\Sp}\psi dS_{K}=\gamma\int_{\Sp}\frac{\psi}{\varphi(h_{K})}d\mu_f\quad \forall \psi\in C(\Sp).\]
\end{proof}
\subsection{Proofs}
\begin{proof}[Proof of Theorem \ref{main theoremGeneral}]
For the moment, we suppose $\varphi$ is smooth and Assumption \ref{assum} is satisfied. Using approximation by discrete measures \cite[Thm. 8.2.2]{Schneider2013a} and then bump functions in turn \cite[Lem. 3.7]{Haodi2019}, there exists a family of positive functions $\{f_i\}\subset C^{\infty}(\Sp)$ such that
$\mu_{f_i}\to\mu$ weakly as $i\to\infty.$ We can choose $\{f_i\}$ to be invariant under $G$ when $\mu$ is $G$ invariant. Since the support of $\mu$ is not contained in a closed hemisphere, by \cite[Cor.~3.7]{Huang2012} we obtain
\begin{align}\label{ww}
\min_{v\in\Sp}\|h_{\hat{v}}\|_{\phi,\mu_{f_i}}\geq\delta>0.
\end{align}

Suppose $K_i$ is a solution corresponding to the measure $\mu_{f_i}$ given by Corollary \ref{cor}. By (\ref{ww}), we have uniform lower and upper bounds on $w_-(K_i)$ and $w_+(K_i).$ Thus a subsequence of $\{K_i\}$ converges to a convex body $K\in \mathrm{K}_o$.

Since $\mu_{f_i}\to \mu$ weakly, by Lemma \ref{key est} the constants $\lambda,\Lambda$ of Corollary \ref{cor} depend only on $w_-,w_+,\varphi,\mu.$ Hence, $K$ is the desired solution. A further approximation allows us to assume $\varphi$ is merely continuous.
\end{proof}
\begin{proof}[Proof of Theorem \ref{main theoremHLYZ}]
We only mention the necessary changes in the argument leading to Theorem \ref{main theoremGeneral}. Suppose $\{f_i\}\subset C^{\infty}(\Sp)$ is a family of positive even functions such that
$\mu_{f_i}\to\mu$ weakly as $i\to\infty.$
We consider the flow of smooth, strictly convex hypersurfaces $\{M_{t,i}\}$ whose support functions satisfy
\begin{align*}
\left\{
  \begin{array}{ll}
\partial_th= f_i  h\varphi(h)\sigma_n-\frac{\int_{\Sp} h\sigma_nd\theta}{\int_{\Sp}\frac{h}{\varphi(h)}d\mu_{f_i}} h;\\
h(\cdot,0)\equiv1.
  \end{array}
\right.
\end{align*}
Note for each $i$, the flow hypersurfaces $\{M_{t,i}\}$ remain $o$-symmetric and converge smoothly to a strictly convex hypersurface $M_{i}$, enclosing the region $K_i$, which solves
\[\varphi(h_{K_i})dS_{K_i}=\frac{(n+1)V(M_i)}{\int_{\Sp}\frac{h_{K_i}}{\varphi(h_{K_i})}d\mu_{f_i}}d\mu_{f_i}.\]

For $i$ sufficiently large, the uniform upper bound for $\{h_{K_i}\}$ follows from the following argument. For $v\in\mathbb{S}^n$, let $h_{\bar{v}}$ be the support function of the line segment joining $\pm v$. Then we have $\min_{v\in\mathbb{S}^n}\|h_{\bar{v}}\|_{\phi,\mu_{f_i}}>0.$ Now for any $v$ and $R$ such that $\pm Rv\in M_{i}$ with maximal distance from the origin, we have $Rh_{\bar{v}}\leq h_{K_i}$. Therefore, arguing similarly as in (\ref{R}), we deduce \[R\min_{v\in\mathbb{S}^n}\|h_{\bar{v}}\|_{\phi,\mu_{f_i}}\leq 1.\] 

Since $\mu$ is a finite even Borel measure which is not concentrated on a great subsphere, by \cite[Cor.~3.7]{Huang2012} we have
\begin{align*}
\min_{v\in\Sp}\|h_{\bar{v}}\|_{\phi,\mu_{f_i}}\geq\delta>0 \quad \mbox{for $i$ large enough}.
\end{align*}
Hence due to $V(M_i)\geq V(\mathbb{S}^n),$ we find constants $a,b$ such that
\[a\leq h_{K_i}\leq b.\]
This yields $\lambda, \Lambda$, depending only on $a,b$, such that
\begin{align*}
\lambda\leq \int_{\Sp}\frac{h_{K_i}}{\varphi(h_{K_i})}d\mu_{f_i}\leq \Lambda \quad \mbox{for $i$ sufficiently large}.
\end{align*}
Therefore, a subsequence of $\{K_i\}$ converges to a solution of (\ref{measure orlicz prob}).
\end{proof}
\begin{remark}
Note that in Theorem \ref{main theoremHLYZ}, the condition $\lim_{s\downarrow 0}\varphi(s)=\infty$ is relaxed. In this case, the lower bound on the support function follows easily as is explained in the proof of Theorem \ref{main theoremHLYZ}. While to prove Theorem \ref{main theoremGeneral}, we use an approximation argument in which it is essential that the right-hand side of (\ref{phi-phieps2}) converges to zero; cf., Corollary \ref{cor}.
\end{remark}
\section{Christoffel-Minkowski type problems}
Given a smooth, positive function $f$ defined on the unit sphere, the $L_p$-Christoffel--Minkowski problem asks for a smooth, strictly convex hypersurface with positive support function $h$ that satisfies
\[fh^{1-p}\sigma_k=\gamma, \quad \mbox{for some constant}~\gamma>0.\]
Here $p\in \mathbb{R}$, $\sigma_k$ is the $k$-th elementary symmetric polynomial
\[\sigma_{k}(\lambda)=\sum_{1\leq i_{1}<\dots<i_{k}\leq n}\lambda_{i_{1}}\cdots \lambda_{i_{k}},\]
and $\lambda_i$ are the principal radii of curvature.
The reader may consult \cite{Guan2018a,Guan2003a,Hu2004} and references therein regarding this problem.

In this section, we solve a class of $L_p$-Christoffel--Minkowski type problems, allowing a broader class of curvature functions in place of $\sigma_k,$ which can be considered as a complement of \cite[Thm. 3.13]{BIS2020}. Our proof remains brief and only the necessary modifications of the proof in \cite{Ivaki2019} are highlighted. Define
\[\Gamma_+=\{(\lambda_i)=(\lambda_1,\dots,\lambda_n)\in\mathbb{R}^n; \lambda_i>0\}.\]

\begin{Assum}\label{assum+generalization} Suppose
$F\in C^{\infty}(\Gamma_+)$ is positive, 1-homogeneous, strictly monotone, $F(1,\dots,1)=1,$ inverse concave, i.e., \[F_{\ast}(\lambda_i):=\frac{1}{F(\lambda_i^{-1})}\quad \mbox{is concave,}\]
  and $F_{\ast}|_{\partial\Gamma_+}=0$.
\end{Assum}
By \cite[Lem. 2.2.11]{Gerhardt2006b}, $\sigma_k^{\frac{1}{k}}$ satisfies Assumption \ref{assum+generalization}.
\begin{theorem}
Let $k\in \mathbb{N},$ $p>k+1$ and $0<f\in C^{\infty}(\mathbb{S}^n)$ satisfy
\[\bar{\nabla}_i\bar{\nabla}_jf^{\frac{1}{p+k-1}}+\bar{g}_{ij}f^{\frac{1}{p+k-1}}>0.\]
Suppose $F$ is a function of the principal radii of curvature and satisfies Assumption \ref{assum+generalization}.
Then there exists a closed, smooth, strictly convex hypersurface with positive support function that solves
\[fh^{1-p}F^k=1.\]
\end{theorem}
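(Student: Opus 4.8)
The plan is to solve the problem by a curvature flow without a global term, following the strategy of \cite{Ivaki2019} and adapting it to the more general curvature function $F$ permitted by Assumption \ref{assum+generalization}. Concretely, I would study the flow of support functions
\[
\partial_t h = -h\left( f h^{-p} F^k - 1\right),\qquad h(\cdot,0)=h_0,
\]
where $h_0$ is the support function of a suitable initial strictly convex hypersurface (e.g. a large sphere), $F=F(r_{ij})$ is evaluated on the principal radii of curvature matrix \eqref{rij}, and one checks that stationary solutions are exactly the solutions of $fh^{1-p}F^k=1$. The parabolicity of this PDE is guaranteed by the strict monotonicity of $F$ and the strict convexity of the evolving hypersurfaces (so that the radii of curvature lie in $\Gamma_+$).

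The key steps are the standard a priori estimates. First I would establish $C^0$-bounds: a uniform upper bound on $h$ comes from the maximum principle applied at a spatial maximum, using $p>k+1\ge 1$ and the positivity/lower bounds on $f$ so that the bracket $fh^{-p}F^k-1$ becomes negative when $h_{\max}$ is large (here one uses $F^k\le C\|W\|^k$-type estimates together with the fact that at a maximum the curvature cannot be too large relative to $h$ — more precisely one uses that $F$ is $1$-homogeneous and monotone and bounds $F$ by the mean radius of curvature, controlled by $h$ and its first derivatives). A uniform positive lower bound on $h$ follows similarly at a spatial minimum once the curvature is controlled, or — and this is the role of the Christoffel-type hypothesis on $f$ — via a lower barrier argument analogous to the one in \cite{Ivaki2019}. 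Next come the curvature estimates: a lower bound $F\ge c>0$ (equivalently the radii of curvature stay bounded away from the boundary of $\Gamma_+$) and an upper bound on the principal curvatures, obtained by applying the maximum principle to auxiliary quantities of the form $\log(F^k/h^\alpha)$ and $\log(\|W\|/h^\beta)$ for suitable exponents, exactly as in the cited Lemmas in Section \ref{reg est} and in \cite[Lem.~8]{Bryan2018d}; inverse concavity of $F$ is precisely what makes the second-derivative terms in these computations have a favorable sign. With $C^0$ and $C^2$ estimates in hand, higher regularity follows from Krylov--Safonov and Schauder theory, the flow exists for all time, and an appropriate monotone functional (the analogue of $\mathcal F=V-\int \phi(h)/f$ in Section \ref{sec:conv}, here with $\phi$ built from $h^{1-p}$) together with the convergence argument of \cite{Andrews1997a,Guan2003c} upgrades subconvergence to smooth convergence to a stationary solution.

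The delicate point — and the one that genuinely uses the Christoffel--Minkowski structure rather than just the Minkowski structure — is \emph{preservation of convexity}, i.e. showing the evolving $r_{ij}$ stays positive definite. This is where the hypothesis $\bar\nabla_i\bar\nabla_j f^{1/(p+k-1)}+\bar g_{ij}f^{1/(p+k-1)}>0$ enters: one applies the tensor maximum principle of Andrews (for degenerate parabolic equations on symmetric $2$-tensors) to the matrix $r_{ij}$, and the null-eigenvector / gradient terms must be shown to have the right sign. The inverse concavity of $F$ ensures the reaction terms coming from the curvature function cooperate, while the precise exponent $\frac{1}{p+k-1}$ is dictated by matching the homogeneity so that the forcing term $fh^{-p}$ can be absorbed; the constraint $p>k+1$ is what makes these sign conditions compatible. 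I expect verifying this tensor maximum principle computation — keeping careful track of the first-order gradient terms and invoking the concavity of $F_\ast$ at a null eigenvector — to be the main obstacle, and I would organize the proof so that everything else reduces to citing the corresponding steps in \cite{Ivaki2019} and Section \ref{reg est}, highlighting only the modifications needed to accommodate a general $F$ in place of $\sigma_k^{1/k}$.
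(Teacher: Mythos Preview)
Your overall strategy---run a curvature flow without a global term whose stationary points solve $fh^{1-p}F^k=1$---matches the paper's, and you correctly locate where the hypothesis on $f^{1/(p+k-1)}$ enters (the tensor maximum principle for the radii). There are, however, two genuine gaps.

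First, the sign of your flow and choice of initial data are inconsistent with your $C^0$ argument. With $\partial_t h=-h(fh^{-p}F^k-1)$ and a \emph{large} sphere of radius $R$, one has $fh^{1-p}F^k=fR^{k+1-p}\ll 1$ (since $p>k+1$), so the bracket is negative and $\partial_t h>0$: the hypersurface expands, and your claimed upper bound runs backwards. The paper takes the opposite sign, $\partial_t h=fh^{2-p}F^k-h$, and starts from a \emph{small} sphere chosen precisely so that $fh^{1-p}F^k>1$ initially; the $C^0$ bounds then follow exactly as in the proof of \Cref{C0 estimate} for \Cref{main theorem2}.

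Second, and more fundamentally, your convergence mechanism fails. You propose a monotone functional analogous to $V-\int\phi(h)/f$ and then the gradient-flow argument of \cite{Andrews1997a,Guan2003c}. But for a general $F$ satisfying Assumption~\ref{assum+generalization} the equation $fh^{1-p}F^k=1$ has \emph{no} variational structure, so no such functional is available (the paper states this explicitly). The paper's substitute is pointwise monotonicity of $h$: writing $\Theta=fh^{2-p}$ and $G=F^k$, one computes
\[
\mathcal{L}\!\left(\frac{\Theta G}{h}-1\right)=2\Theta G^{ij}\n_i\log h\,\n_j\!\left(\frac{\Theta G}{h}\right)+(1+k-p)\frac{\Theta G}{h}\left(\frac{\Theta G}{h}-1\right),
\]
and since $1+k-p<0$ the maximum principle preserves $fh^{1-p}F^k>1$ for all $t\ge 0$. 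Hence $\partial_t h>0$ everywhere, $h(\cdot,t)$ is increasing and bounded, and converges directly---no functional required. The same identity gives two-sided bounds on $F$ from the $C^0$ estimates, after which the lower bound on the radii comes from the tensor maximum principle applied to $r^{ij}/h$ (as in \cite[Lem.~2.7]{Ivaki2019}, using inverse concavity and the convexity condition on $f^{1/(p+k-1)}$), and the upper bound on the radii comes not from a $\log(\|W\|/h^\beta)$ computation but simply from $F_{\ast}|_{\partial\Gamma_+}=0$: with $F$ bounded and $\lambda_1$ bounded below, $F/\lambda_1=1/F_{\ast}(\lambda_1/\lambda_n,\dots,1)\le C$ forces $\lambda_n/\lambda_1$ bounded.
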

To prove this theorem, we use a curvature flow similar to (\ref{F-param2}).
For a suitable smooth, strictly convex hypersurface $M_0$ parameterized by
\[x_0: M\to\mathbb{R}^{n+1},\]
we seek smooth, strictly convex hypersurfaces $M_t$ satisfying
\begin{align}\label{F-param4}
\left\{
  \begin{array}{ll}
  \partial_tx= f (\nu)\frac{\langle x,\nu\rangle^{2-p}}{F_{\ast}^k}\nu- x; \\
   x(\cdot,0)=x_0(\cdot).
  \end{array}
\right.
\end{align}
Therefore, the support functions of the $M_t$ satisfy
\begin{align}
\partial_th=fh^{2-p}F^k-h.
\end{align}

Compared to the flow \cite[(1.4)]{Ivaki2019}, here we do not have the global term $\eta$. Moreover, since in general the problem $fh^{1-p}F^k=1$ does not have any variational structure, we cannot expect the convergence of solutions for all initial hypersurfaces. Therefore, we follow the approach of \cite{BIS2020,Gerhardt2006b} to obtain the regularity estimates.

Step 1: Choose $M_0$ to be a sufficiently small origin-centered sphere such that \[fh^{1-p}F^k|_{M_0}>1.\] This is possible due to $p>k+1.$

Step 2: We obtain uniform lower and upper bounds on the support function as in the proof of Lemma \ref{C0 estimate} here (the case of Theorem \ref{main theorem2}).

Step 3: Let us put
\[G=F^k,\quad \Theta=f h^{2-p},\quad \mathcal{L}=\partial_t-fh^{2-p}G^{ij}\bar{\nabla}_i\bar{\nabla}_j.\]
We have
\begin{align}\label{soliton eq2}
\mathcal{L}(\frac{\Theta G}{h}-1)=&2\Theta G^{ij}\n_i\log h\n_j\left(\frac{\Theta G}{h}\right)
\nonumber
\\&+\left(1+k-p\right)\left(\frac{\Theta G}{h}\right)\left(\frac{\Theta G}{h}-1\right).
\end{align}
Thus, $fh^{1-p}F^k$ is uniformly bounded above and below. By Step 2, we obtain uniform lower and upper bounds on $F.$

Step 4: Let $r^{ij}$ denote the inverse of principal radii of curvature. Since $F$ is inverse concave, as in \cite[Lem. 2.7]{Ivaki2019}, we can apply the maximum principle to the auxiliary function
\[\frac{r^{ij}}{h}\]
to obtain a uniform lower bound on the principal radii of curvature.

Step 5: Let us arrange the principal radii of curvature as \[\lambda_1\leq \cdots\leq \lambda_n.\] Due to Steps 3 and 4,
\[C\geq\frac{F}{\lambda_1}=F(1,\ldots,\frac{\lambda_n}{\lambda_1})=\frac{1}{F_{\ast}(\frac{\lambda_1}{\lambda_n},\ldots,1)},\]
for some constant $C.$ Hence, by Assumption \ref{assum+generalization}, we get a uniform upper bound on the principal radii of curvature.

Step 6: Now that we have uniform $C^2$-estimates, higher order regularity estimates follow as well and the flow smoothly exists on $[0,\infty).$

Step 7: By (\ref{soliton eq2}), we have
\[\partial_th=fh^{2-p}F^k-h>0\quad \mbox{for all}~t\geq 0.\]
Moreover, due to Step 2,
\begin{align*}
0<h(u,t)-h(u,0)=\int_0^t (fh^{2-p}F^k-h)(u,s)ds<\infty.
\end{align*}
Therefore, in view of monotonicity of $h$, the limit
\[\tilde{h}(u):=\lim_{t\to\infty}h(u,t)\]
exists and is positive, smooth and $\bar{\nabla}^2\tilde{h}+\bar{g}\tilde{h}>0$. Thus the hypersurface with support function $\tilde{h}$ is our desired solution to
\[fh^{1-p}F^k=1.\]
\section*{Acknowledgment}
PB was supported by the ARC within the research grant ``Analysis of fully non-linear geometric problems and differential equations", number DE180100110. MI was supported by a Jerrold E. Marsden postdoctoral fellowship from the Fields Institute. JS was supported by the ``Deutsche Forschungsgemeinschaft" (DFG, German research foundation) within the research scholarship ``Quermassintegral preserving local curvature flows", grant number SCHE 1879/3-1.
\bibliographystyle{amsalpha-nobysame}
\bibliography{library}
\vspace{10mm}

\textsc{Department of Mathematics, Macquarie University,\\ NSW 2109, Australia, }\email{\href{mailto:paul.bryan@mq.edu.au}{paul.bryan@mq.edu.au}}

\vspace{2mm}

\textsc{Department of Mathematics, University of Toronto,\\ Ontario, M5S 2E4, Canada, }\email{\href{mailto:m.ivaki@utoronto.ca}{m.ivaki@utoronto.ca}}

\vspace{2mm}

\textsc{Department of Mathematics, Columbia University,\\ New York, NY 10027, USA, }\email{\href{mailto:jss2291@columbia.edu}{jss2291@columbia.edu}}
\end{document}